\newcommand{\Pb}{\mathbb{P}}
\newcommand{\N}{\mathbb{N}}
\newcommand{\R}{\mathbb{R}}
\theoremstyle{plain}
\newtheorem{thm}{Theorem}[section]
\newtheorem{lem}[thm]{Lemma}
\newtheorem{cor}[thm]{Corollary}
\newtheorem{prop}[thm]{Proposition}
\theoremstyle{definition}
\newtheorem{rem}[thm]{Remark}
\newtheorem*{problem}{Problem}
\begin{document}
\title{Sign changes of Hecke eigenvalues}
\author{Kaisa Matom\"aki}
\address{Department of Mathematics and Statistics \\ University of Turku \\ 20014
  Turku \\ Finland}
\email{ksmato@utu.fi} 
\thanks{The first author was supported by the Academy of Finland grant no. 137883. The second author was partially supported by NSF grant DMS-1128155}

\author{Maksym Radziwi\l\l}
\address{School of Mathematics \\ Institute for Advanced Study \\ 1 Einstein
Drive \\ Princeton, NJ, 08540, USA}
\email{maksym@ias.edu}
\subjclass[2000]{}
\curraddr{Department of Mathematics,
Rutgers University \\
Hill Center - Busch Campus,
110 Frelinghuysen Road \\
Piscataway, NJ 08854-8019, USA}
\email{maksym.radziwill@gmail.com}
\date{\today}



\maketitle

\section{Introduction}
\label{sec:intro}
Let $f$ be a holomorphic Hecke cusp form or a Maass Hecke cusp form
and write $\lambda_f(n)$
for its normalized Fourier coefficients, so that
$$
f(z) = \sum_{n = 1}^{\infty} \lambda_f(n) n^{(\kappa - 1)/2} e(n z)
$$
in the holomorphic case, and
$$
f(z) = \sqrt{y} \sum_{n \neq 0}  \lambda_f(n) K_{s-1/2}(2\pi |n| y)
e(n x)
$$
in the Maass case. 

In~\cite{Kowalski} Kowalski, Lau, Soundararajan and Wu investigate the problem of the first sign change of $\lambda_f(n)$
for holomorphic $f$. They remark on the similarities with the problem of the least quadratic 
residue.
This motivates the point of view that the signs of $\lambda_f(n)$ are $\text{GL}(2)$ analogues of real characters. 
The frequency of signs and sign changes and other related questions
have been also recently studied in \cite{ChSaSe09, LauLiuWu10, LauWu09, MeShVi13, WuZhai13}. In \cite{GhoshSarnak} Ghosh and Sarnak 
relate sign changes of $\lambda_f(n)$ to zeros of $f(z)$ on the imaginary line. 

The sequence of signs of $\lambda_f(n)$ alone is known to determine the form $f$ (see \cite{Kowalski}). 
Because of this we expect a significant amount of randomness 
in the distribution of the signs of $\lambda_f(n)$. It is the
finer details of this randomness that we set out to investigate in this
paper.
We start by showing that half of the non-zero $\lambda_f(n)$ are positive
and half are negative. This is relatively straightforward and depends
only on the multiplicativity of $\lambda_f(n)$. For simplicity
we focus on the case of the full modular group.
%
\begin{thm}
\label{th:longintf}
Let $f$ be a holomorphic Hecke cusp form for the full modular group 
Asymptotically half of the non-zero $\lambda_f(n)$ are positive and half of them are negative. 
\end{thm}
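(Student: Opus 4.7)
Define the multiplicative function $g(n) := \operatorname{sgn}(\lambda_f(n))$, with the convention $\operatorname{sgn}(0) := 0$, so that $g : \N \to \{-1,0,1\}$ is real-valued, multiplicative, and bounded by $1$. Then
$$
\#\{n \le x : \lambda_f(n) > 0\} - \#\{n \le x : \lambda_f(n) < 0\} = \sum_{n \le x} g(n).
$$
Serre's zero-density theorem gives $\#\{n \le x : \lambda_f(n) = 0\} \ll x/(\log x)^{\delta}$ for some $\delta > 0$, so the number of $n\le x$ with $\lambda_f(n)\ne 0$ is $\sim x$. Consequently the theorem reduces to the mean-value estimate
$$
\sum_{n \le x} g(n) = o(x).
$$

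The plan is to deduce this from the Hal\'asz--Wirsing mean-value theorem for real multiplicative functions bounded by $1$, whose hypothesis in our setting amounts to
$$
\sum_{p \le x} \frac{1 - g(p)}{p} \longrightarrow \infty,
$$
or equivalently that the primes with $\lambda_f(p) < 0$ have divergent reciprocal sum. To verify this I would combine two classical inputs. First, Rankin's non-vanishing of $L(s,f)$ on $\re s = 1$, via the prime number theorem for $L(s,f)$ and partial summation, gives $\sum_{p \le x} \lambda_f(p)/p = O(1)$. Second, the Rankin--Selberg estimate $\sum_{p \le x} \lambda_f(p)^2/p \sim \log\log x$, together with Deligne's bound $|\lambda_f(p)| \le 2$, yields $\sum_{p \le x} |\lambda_f(p)|/p \to \infty$.

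Splitting $\sum_{p \le x} |\lambda_f(p)|/p = S^+(x) + S^-(x)$ according to the sign of $\lambda_f(p)$, the first input gives $S^+(x) - S^-(x) = O(1)$ while the second gives $S^+(x) + S^-(x) \to \infty$; adding and subtracting forces both $S^{\pm}(x) \to \infty$. Since $|\lambda_f(p)| \le 2$, this in turn forces $\sum_{p \le x,\ \lambda_f(p) < 0} 1/p \ge \tfrac{1}{2} S^-(x) \to \infty$, verifying the Hal\'asz--Wirsing hypothesis and hence the required mean-value bound.

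The main obstacle is essentially bookkeeping: locating a clean statement of Hal\'asz--Wirsing applicable to a real multiplicative function that is allowed to vanish, and quoting the standard analytic information about $L(s,f)$ and $L(s,f\otimes f)$. No deep new input is needed, consistent with the paper's remark that the argument depends only on the multiplicativity of $\lambda_f$ together with classical analytic facts about it.
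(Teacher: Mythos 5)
Your reduction of the theorem to $\sum_{n\le x} g(n)=o(x)$ rests on a false step: ``Serre's zero-density theorem gives $\#\{n\le x:\lambda_f(n)=0\}\ll x/(\log x)^{\delta}$, so the number of $n\le x$ with $\lambda_f(n)\neq 0$ is $\sim x$.'' Serre's theorem is a statement about \emph{primes}: $\#\{p\le x:\lambda_f(p)=0\}\ll x/(\log x)^{1+\delta}$, equivalently $\sum_{\lambda_f(p)=0}1/p<\infty$. It does not give density $1$ for nonvanishing at \emph{integers}: if $\lambda_f(p)=0$ for even a single prime $p$ (which cannot be ruled out for level $1$ -- this is a Lehmer-type open problem), then every $n$ exactly divisible by $p$ has $\lambda_f(n)=0$, a set of density $\tfrac1p(1-\tfrac1p)>0$, so $\#\{n\le x:\lambda_f(n)=0\}$ may well be $\gg x$. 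What is actually known (and is what Serre proves) is only $\#\{n\le x:\lambda_f(n)\neq 0\}\asymp x$. Fortunately that weaker input suffices for your argument: from $\#\{\lambda_f(n)>0\}-\#\{\lambda_f(n)<0\}=o(x)$ and $\#\{\lambda_f(n)\neq 0\}\gg x$ one still gets $\#\{\lambda_f(n)>0\}=(\tfrac12+o(1))\,\#\{\lambda_f(n)\neq 0\}$. This is exactly the point where the paper is more careful: instead of asserting density one, it evaluates $\sum_{n\le x}|g(n)|$ asymptotically by a Delange/Wirsing-type mean value theorem (Lemma~\ref{le:realDel}), using $\sum_{\lambda_f(p)=0}1/p<\infty$, and then splits $\#\{g=\pm1\}=\tfrac12\bigl(\sum|g(n)|\pm\sum g(n)\bigr)$.

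Apart from this, your argument is sound and structurally the same as the paper's: both hinge on a Hal\'asz-type theorem for real multiplicative functions with values in $[-1,1]$ (the paper quotes Hall--Tenenbaum, Lemma~\ref{le:Halreal}, which indeed allows $g$ to vanish), applied to $g=\operatorname{sgn}(\lambda_f)$, with the hypothesis $\sum_{p\le x,\ \lambda_f(p)<0}1/p\to\infty$. Your verification of that hypothesis -- combining $\sum_{p\le x}\lambda_f(p)/p=O(1)$ (non-vanishing of $L(s,f)$ on $\operatorname{Re} s=1$ plus partial summation) with $\sum_{p\le x}\lambda_f(p)^2/p\sim\log\log x$ (Rankin--Selberg) and Deligne's bound to force both signed sums $S^{\pm}(x)\to\infty$ -- is a correct alternative to the paper's route, which instead cites a positive proportion of primes with $\lambda_f(p)<0$ (Murty, or Sato--Tate; Lemma~\ref{le:posproplam><0}). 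So the only genuine defect is the integer-level nonvanishing claim; replace it by Serre's positive-proportion theorem (or by the paper's mean-value computation of $\sum|g(n)|$) and the proof goes through.
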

Theorem 1.1 was also recently obtained independently (for Maass forms)
by Elliott and Kish \cite[Theorem 7]{ElliottKish}. 
Previously it was known (for holomorphic forms) that a positive proportion of the $\lambda_f(n)$ are positive and a positive proportion negative \cite[Theorem 1]{LauWu09}.
If  $\lambda_f(n)$ and $\lambda_f(n + 1)$ behave
independently, and $\lambda_f(n)$ never vanishes then 
we expect $(1/2 + o(1)) x$ sign changes in the sequence $(\lambda_f(n))_{n \leq x}$, since individually
each  $\lambda_f(n)$ and $\lambda_f(n + 1)$ is positive and negative half of the time. 
Even if $\lambda_f(n)$ happened to vanish we still expect a positive
proportion of sign changes on the relative subset of those $n$ for which
$\lambda_f(n) \neq 0$. 
Note that we do not consider the situation where $\lambda_f(n) < 0$ 
and $\lambda_f(n + 1) = 0$ to be a sign change, because the sign of $0$ is
undefined. Thus, by number of sign changes of a possibly vanishing
sequence $\lambda_f(n)$ we mean the number of sign changes of $\lambda_f(n)$
on the subset of $n$ at which $\lambda_f(n) \neq 0$. 
Our main result is the following.
%
\begin{thm}
\label{th:signchanges}
Let $f$ be a holomorphic Hecke cusp form for the full modular group or a Hecke Maass cusp form for the full modular group.
Then for every large enough $x$, the number of sign changes in the sequence $(\lambda_f(n))_{n \leq x}$
is of the order of magnitude
\begin{equation}
\label{eq:lafnneq0form}
 \#\{n \leq x: \lambda_f(n) \neq 0\} \asymp
x \prod_{\substack{p \leq x \\ \lambda_f(p) = 0}} \Big ( 1 - \frac{1}{p} \Big ) . 
\end{equation}
\end{thm}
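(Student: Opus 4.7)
The upper bound is immediate: a sign change can only occur between two nonzero values of $\lambda_f$, so the number of sign changes is at most $N(x):=\#\{n\le x:\lambda_f(n)\ne 0\}$. Writing $\rho(x):=\prod_{p\le x,\ \lambda_f(p)=0}(1-1/p)$, one has $N(x)\asymp x\rho(x)$. The upper direction follows from the observation that $\lambda_f(n)\ne 0$ forces every bad prime $p$ (one with $\lambda_f(p)=0$) to divide $n$ to an even power, together with an upper-bound sieve. For the lower direction one restricts to squarefree $n$ composed only of good primes, on which $\lambda_f(n)=\prod_{p\mid n}\lambda_f(p)\ne 0$, and notes that the density of such $n$ is $\gg\rho(x)$ by the fundamental lemma.

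The lower bound on the number of sign changes is the main content. My plan is a short-interval argument: set $H=C/\rho(x)$ for a large constant $C$ and partition $[x/2,x]$ into $\asymp x\rho(x)$ disjoint intervals of length $H$. Showing that a positive proportion of these intervals each contain at least one sign change inside will yield $\gg x\rho(x)$ sign changes in total. This reduces to proving two short-interval estimates valid for almost every $x_0\in[x/2,x]$:
\[
\mathrm{(i)}\ \Big|\sum_{x_0<n\le x_0+H}\lambda_f(n)\Big|=o\bigl(H\rho(x)\bigr),\qquad \mathrm{(ii)}\ \sum_{x_0<n\le x_0+H}|\lambda_f(n)|\gg H\rho(x).
\]
Given (i) and (ii), both the positive and negative parts $\tfrac12\bigl(\sum|\lambda_f|\pm\sum\lambda_f\bigr)$ are $\gg H\rho(x)>0$ in the interval, so both signs are attained and a sign change occurs.

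Estimate (ii) is the more routine of the two: it is a short-interval bound for the nonnegative multiplicative function $|\lambda_f|$, whose overall mean value $\sum_{n\le x}|\lambda_f(n)|$ is of order $x\rho(x)$ (up to logarithmic factors) via Wirsing's theorem and Deligne's bound $|\lambda_f(p)|\le 2$. The main obstacle lies in (i): a direct application of a Matom\"aki-Radziwi\l\l{}-type theorem to $\lambda_f$ (bounded by $2$ at primes) yields only $o(H)$ cancellation, which is insufficient when $\rho(x)\to 0$---as in the CM case where $\rho(x)\asymp(\log x)^{-1/2}$. To obtain the sharper $o(H\rho(x))$, I would factor the Dirichlet series $\sum\lambda_f(n)n^{-s}=L(s,f)$ so as to isolate the Euler factors at bad primes, and apply a Hal\'asz-type pretentious inequality to the good-prime factor, using that $\lambda_f$ does not pretend to be any Archimedean character $n^{it}$ (a consequence of the non-vanishing of $L(s,f)$ on $\re(s)=1$). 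Implementing this factorization uniformly across almost every short interval is the crucial technical step.
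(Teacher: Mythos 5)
Your overall frame (compare a signed and an unsigned short-interval sum for almost all starting points, then count intervals) is in the same spirit as the paper, but the specific estimates (i) and (ii) you reduce to are not provable --- indeed both fail in the stated strength --- and the failure is exactly the obstruction the paper builds its mollifier to avoid. For (ii): by Rankin/Elliott--Wirsing and Sato--Tate-type input, the mean value of $|\lambda_f(n)|$ in the holomorphic non-CM case is $\asymp x(\log x)^{-\delta}$ with $\delta = 1 - \tfrac{8}{3\pi} > 0$, while $\rho(x) \asymp 1$ there; so the average over $x_0$ of $\sum_{x_0 < n \le x_0+H}|\lambda_f(n)|$ is already $o\bigl(H\rho(x)\bigr)$, and (ii) cannot hold for almost every (or even a positive proportion of) $x_0$. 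The ``logarithmic factors'' you set aside are the whole problem: repairing (ii) to the true size $H(\log x)^{-\delta}$ forces you, in order to beat the diagonal, to take $H \gg (\log x)^{2\delta}$, and the interval count then yields only $\gg x(\log x)^{-2\delta}$ sign changes, short of the claimed $\asymp x\rho(x)$. For (i): with $H = C/\rho(x)$ the target $o(H\rho(x))$ is $o(1)$ for fixed $C$, but the mean square of the short sum over $x_0 \in [X,2X]$ has a diagonal contribution $\asymp HX$ (since $\sum_{n\le X}\lambda_f(n)^2 \asymp X$), so the typical size is $\gg \sqrt{H}$; no almost-all bound can go below this square-root barrier, and no Hal\'asz/pretentious argument (which in any case does not apply to bounded-length intervals, nor to the unbounded Maass coefficients) will produce it.

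The paper's proof is designed precisely to sidestep this size-fluctuation problem: instead of weighting by $|\lambda_f(n)|$, it uses sieve weights $w_n \ge w_n' \ge 0$ in which the small-prime part $a \le y$ of $n$ contributes only the indicator $\mu^2(a)\mathbf{1}_{\lambda_f(a)\neq 0}$ and the factor $|\lambda_f(b)|$ is carried only by the part $b$ free of small prime factors (Brun weights $\rho^+$, resp. $P^-(b)>y$). This makes the first and second moments of $w_n'$ comparable, $\sum_{X\le n\le 2X} w_n' \gg X\rho(X)$ and $\sum_{X\le n\le 2X} (w_n')^2 \ll X\rho(X)$, so a Cauchy--Schwarz argument gives $\sum_{x \le n \le x+hk(x)} w_n' \gg h$ on a positive proportion of $x$; meanwhile the signed sum $\sum_{x \le n \le x+hk(x)} \mathrm{sgn}(\lambda_f(n)) w_n$ is shown to be $\le CK\sqrt{h}$ outside a small exceptional set, using the bilinear structure of the weights and Harcos's shifted-convolution bound. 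Crucially the paper never needs $o(\cdot)$ cancellation: it only needs to beat the square-root barrier by taking the constant $h$ large, and since $w_n \ge w_n' \ge 0$ the comparison still detects a sign change. Your proposal has no mechanism playing this role: factoring out the Euler factors at the primes with $\lambda_f(p)=0$ addresses only the vanishing, not the Sato--Tate fluctuation of $|\lambda_f(p)|$ on the remaining primes, and in the Maass case you additionally lack Deligne's bound, lack any lower bound on $\rho(x)$, and cannot assert (ii) even heuristically. (Your upper-bound paragraph and $N(x)\asymp x\rho(x)$ are essentially fine, though the lower bound requires more than the fundamental lemma when the set of primes with $\lambda_f(p)=0$ may be large; the paper cites a result of Granville--Koukoulopoulos--Matom\"aki together with the positive proportion of primes with $\lambda_f(p)<0$.)
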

For holomorphic forms $f$ Serre \cite{Serre81} established that $\lambda_f(n) \neq 0$ for a positive proportion of $n$. This is expected to hold for Hecke Maass forms of eigenvalue $> \tfrac 14$ but is not currently known. By comparing the second and fourth moment of $|\lambda_f(n)|$ with $f$ a Hecke Maass form, one can show that $\lambda_f(n) \neq 0$ for at least $c X / \log X$ integers $n \leq X$. Slightly better results can be obtained by comparing moments of $|\lambda_f(p)|$.

Previously it was only known that there are $\gg x^{1/2}$ sign changes in the holomorphic case (see \cite{LauWu09})
and $x^{1/8-\varepsilon}$ sign changes in the Maass case (see \cite{ChSaSe09}).
We will prove in Section~\ref{se:pfofcor} the following corollary on a variant of Chowla's conjecture.
\begin{cor}
\label{cor:Cho} Let $f$ be a holomorphic Hecke cusp form for the full modular group. There exists a constant $c >0$ such that, for all $x$ large enough,
\[
\Big | \sum_{n \leq x} \text{sgn}(\lambda_f(n)) \text{sgn}(\lambda_f(n + 1)) \Big |
\leq (1 - c) x.
\]
\end{cor}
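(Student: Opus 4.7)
The plan is to combine Theorem~\ref{th:signchanges} with a case analysis on the density of zeros of $\lambda_f$. Let $Z := \#\{n \leq x : \lambda_f(n) = 0\}$, let $V := \#\{n \leq x : \lambda_f(n) \lambda_f(n+1) \neq 0\}$, and let $D$ (resp.\ $SM$) denote the number of such pairs $(n,n+1)$ with opposite (resp.\ equal) nonzero signs, so $V = SM + D$ and the sum in question equals $SM - D$. The identity $|SM - D| = V - 2\min(SM, D)$ reduces the corollary to showing either $V \leq (1-c)x$ or $\min(SM, D) \gg x$.

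If $Z \geq c_0 x$ for a fixed small $c_0 > 0$, the trivial estimate $V \leq x - Z \leq (1 - c_0)x$ finishes the argument. Otherwise $Z < c_0 x$. By Theorem~\ref{th:signchanges} and Serre's positive-proportion result, there are at least $c_1(1 - c_0)x$ sign changes in $(\lambda_f(n))_{n \leq x}$. Each ``far'' sign change (between non-adjacent nonzero values) consumes a distinct zero of $\lambda_f$, so at most $Z < c_0 x$ sign changes are far. Hence $D \geq c_1(1 - c_0)x - c_0 x \geq c_2 x$ for some $c_2 > 0$ once $c_0$ is sufficiently small.

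The complementary bound $SM \geq c_3 x$ is the main obstacle, since Theorem~\ref{th:signchanges} alone does not preclude near-perfect alternation of signs (which would make $SM$ negligible). The multiplicativity of $\lambda_f$ rules this out: writing $s(n) := \text{sgn}(\lambda_f(n))$, if $s(n)$ followed the alternating pattern $(-1)^n$ on most nonzero $n$, then $s(pq) = s(p) s(q) = +1$ for distinct odd primes $p,q$, contradicting the prediction $s(pq) = -1$ on the $\gg x$ squarefree integers $n \leq x$ with an even number of prime factors; the opposite pattern $s(n) = (-1)^{n+1}$ is incompatible with Theorem~\ref{th:longintf}, as it would force $s(p) = +1$ on almost all odd primes and therefore $s(n) = +1$ on a positive proportion of squarefree odd $n$, breaking the positive/negative balance. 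Either way $SM \geq c_3 x$; alternatively, one revisits the proof of Theorem~\ref{th:signchanges} to exhibit close same-sign pairs by the same construction that produces close sign changes. With $c := \min(c_2, c_3) > 0$ we conclude
\[
\Bigl|\sum_{n \leq x} \text{sgn}(\lambda_f(n))\, \text{sgn}(\lambda_f(n+1))\Bigr| = V - 2\min(SM, D) \leq (1 - 2c)x,
\]
as required.
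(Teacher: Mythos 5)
Your reduction and the upper-bound half are fine: the case split on $Z=\#\{n\le x:\lambda_f(n)=0\}$, together with the observation that each ``far'' sign change consumes a distinct zero, correctly converts Theorem~\ref{th:signchanges} into $D\gg x$ (or $V\le(1-c)x$), and this is essentially the content behind the paper's remark that the upper bound follows immediately from Theorem~\ref{th:signchanges}. The genuine gap is the complementary bound $SM\gg x$, which you yourself flag as the main obstacle and then do not actually prove. The inference ``$SM$ negligible $\Rightarrow$ $s(n)=(-1)^n$ or $s(n)=(-1)^{n+1}$ on most nonzero $n$'' is false: the parity alignment $s(n)(-1)^n$ can flip at every zero and at every one of the (few) same-sign adjacent pairs, and even a single flip destroys any global pattern. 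For instance, $s(n)=(-1)^n$ for $n\le x/2$ and $s(n)=(-1)^{n+1}$ for $n>x/2$ has $SM=1$ yet matches neither of your two patterns on a positive proportion of $n$, so your dichotomy does not exhaust the ways $SM$ could be small. Moreover, even within the dichotomy, your sub-arguments pass from ``the pattern holds for most integers $n$'' to statements about $s(p)$ for primes $p$; since the primes have density zero, an integer-density hypothesis gives no control on signs at primes, so the claimed contradictions (via $s(pq)=s(p)s(q)$, or via Theorem~\ref{th:longintf}) are not justified. The fallback ``revisit the proof of Theorem~\ref{th:signchanges} to exhibit close same-sign pairs'' is also not carried out, and it is not automatic: that machinery detects a sign change in $[x,x+hk(x)]$ by comparing $|\sum \text{sgn}(\lambda_f(n))w_n|$ with $\sum w_n'$, and there is no analogous inequality that certifies an adjacent \emph{same-sign} pair.

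The paper's lower bound $\sum_{n\le x}g(n)g(n+1)\ge(-1+c)x$ (with $g=\text{sgn}(\lambda_f)$) is instead a local multiplicativity argument at the prime $2$: by Ghosh--Sarnak there is an even $b\ll_f 1$ with $g(2^b)=1$, whence one finds $j\ll 1$ with $g(2^{j+1})=g(2)g(2^j)$ (the claim being trivial if some $g(2^i)=0$, $i\le b$). For $n\equiv 2^j-1\pmod{2^{j+1}}$ one then has $g(2n)=g(2)g(n)$ and $g(2n+2)=g(2)g(n+1)$, so $g(n)g(n+1)\cdot g(2n)g(2n+1)\cdot g(2n+1)g(2n+2)=(g(n)g(n+1)g(2)g(2n+1))^2\ge 0$, forcing at least one of the three adjacent pairs $(n,n+1)$, $(2n,2n+1)$, $(2n+1,2n+2)$ to contribute a non-negative term; since a positive proportion of $n\le x/2$ lie in this residue class, a positive proportion of terms in the sum are $\ge 0$, which gives the bound. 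Some such argument (or a genuinely new idea) is needed to close your $SM\gg x$ step; as written, the proposal does not prove the lower-bound direction of the corollary.
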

This is non-trivial when all of the $\lambda_f(n)$ are non-zero.
It would be interesting to rule out the possibility of $\lambda_f(n)$ regularly flipping its sign and thus to investigate the \textit{entropy} of
the sequence of sign of $\lambda_f(n)$. 

According to the Sato-Tate conjecture (now a theorem), for any $I \subset [0,2]$ there is a positive proportion of primes
$p$ such that $|\lambda_f(p)| \in I$. Because of this the size of $|\lambda_f(n)|$ fluctuates
wildly, making it hard to detect sign changes.  This is similar to a problem one encounters when studying the zeros of the Riemann zeta-function on the half-line. 
In analogy to that problem, we introduce sieve weights $w_n \geq w_n' \geq 0$ and compare
\begin{equation}
\label{eq:wnwn'sums}
\Big | \sum_{x \leq n \leq x + h} \text{sgn}(\lambda_f(n)) w_n \Big | \text{ and } \sum_{x \leq n \leq x + h} w_n'
\end{equation}
with the intent of showing that the first term is frequently less than the second (since this ensures a
sign change in the interval $[x, x + h]$).
To obtain cancellations in the first sum in \eqref{eq:wnwn'sums} (for almost all $x$), we 
use the work of Harcos on the shifted
convolution problem \cite{Harcos03},
\begin{equation} \label{eq:scp}
\sum_{\substack{X \leq m , n \leq 2X \\ am - b n = h}} \lambda_f(n) \lambda_f(m)
\end{equation}
(see also \cite[Section 3]{SarnakUbis} for the best current result). 
The choice of sieve weights $w_n$ allows us to introduce a bilinear structure, 
and to bound (\ref{eq:wnwn'sums}) on average by picking out 
the cancellations 
in (\ref{eq:scp})
coming from the large primes, while ignoring the small primes (which are a source of
problem due to the size fluctuation of $|\lambda_f(n)|$ when $n$ has many small prime
factors).   
To obtain a good lower bound for the second, non-oscillating, sum in \eqref{eq:wnwn'sums} for a positive proportion of $x$ we
compare the first and second moments, 
$$
\sum_{X \leq n \leq 2X} w_n' \text{ and }
\sum_{X \leq n \leq 2X} {w_n'}^2.
$$
The computation of these moments relies on the close resemblence of $w_n'$ to a multiplicative function
supported on the large primes. 

Finally Theorem~\ref{th:signchanges} does not need to be limited to the full modular group as long as coefficients of the form are real. For holomorphic forms $f$ with complex multiplication, Serre has shown that
\cite{Serre81} 
$$
\sum_{\substack{ p \leq x \\ \lambda_f(p) = 0}} \frac{1}{p} = \frac 12 \log\log x + O(1).
$$
A variant of Theorem~\ref{th:signchanges} could be used to show that, conditioned on the set of those 
$n \leq x$ for which $\lambda_f(n) \neq 0$, there is a positive proportion of
sign changes. We end our discussion by posing the following problem.

\begin{problem}
Let $f$ be a Hecke Maass cusp form of eigenvalue $ > \tfrac 14$. 
Is it possible to show that the coefficients $\lambda_f(n)$ are not lacunary? Precisely is it possible to show that
$$
\sum_{\lambda_f(p) = 0} \frac{1}{p} < \infty?
$$
\end{problem}
We remark that one could work out explicitly the proportion of sign 
changes that Theorem~\ref{th:signchanges} yields, although we expect that doing so is rather hard. It might also be possible to apply our techniques to study sign changes of the error term in the number of representation of $n$ by a quadratic form in $2k$ variables. However to obtain, when applicable, a good result (such as a positive proportion of sign changes) would require adapting some of the techniques of Selberg designed to study zeros of linear combinations of $L$-functions, which would increase the complexity of the proof. 

 The plan of the paper is as follows: In Section~\ref{sec:MultfProp} we prove Theorem~\ref{th:longintf}  using properties of general multiplicative functions. In Section~\ref{se:pfofsignchanges} we formulate three propositions and show how our main result, Theorem~\ref{th:signchanges} follows from them. These three propositions are then proved in Sections~\ref{se:proofmoments}--\ref{se:prooflowbound}. 
\\ 
\begin{rem} While this paper was being refereed,  
we have been able to obtain (by completely different means) a rather general result on multiplicative function in short intervals~\cite{MR} which implies among other things that a multiplicative function $f: \mathbb{N} \rightarrow \mathbb{R}$ has a positive proportion of sign changes 
as soon as $f(m) < 0$ for some integer $m$ and $f(n) \neq 0$ for a positive proportion of integers $n$. 
This recovers Theorem~\ref{th:signchanges} in the holomorphic case, but not in the case of Maass forms, 
since for a Maass form it is currently not ruled out that $\lambda_f(n) = 0$ for almost all integers $n$. 
In addition as pointed out by the anonymous referee, the method developed in this paper is general and will
work for any multiplicative function satisfying reasonable estimates for the associated shifted 
convolution problem, which is especially interesting when the function vanishes on many primes. 
\end{rem}

\section{Proof of Theorem~\ref{th:longintf}}
\label{sec:MultfProp}
Theorem~\ref{th:longintf} follows quickly from the following lemma.
\begin{lem}
\label{le:multfunc+-1}
Let $K, L \colon \R_+ \to \R_+$ be such that $K(x) \to 0$ and $L(x) \to \infty$ for $x \to \infty$. Let $g: \N \to \{-1, 0, 1\}$ be a multiplicative function such that, for every $x \geq 2$,
\[
\sum_{\substack{p \geq x \\ g(p)=0}} \frac{1}{p} \leq K(x) \quad \text{and} \quad \sum_{\substack{p \leq x \\ g(p) = -1}}\frac{1}{p} \geq L(x).
\]
Then
\[
\begin{split}
|\{n \leq x \colon g(n) = 1\}| &= (1+o(1))|\{n \leq x \colon g(n) = -1\}|\\
&= \left(\frac{1}{2}+o(1)\right) x \prod_{p \in \Pb} \left(1-\frac{1}{p}\right) \left(1 + \frac{|g(p)|}{p} + \frac{|g(p^2)|}{p^2} + \dotsb \right),
\end{split}
\]
where $o(1) \to 0$ when $x \to \infty$ and the rate of convergence depends only on $K(x)$ and $L(x)$ but not on $g$.
\end{lem}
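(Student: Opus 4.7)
The plan is to handle the sums $\sum_{n \le x} |g(n)|$ and $\sum_{n \le x} g(n)$ separately, since their half-sum and half-difference are precisely $|\{n \le x : g(n) = +1\}|$ and $|\{n \le x : g(n) = -1\}|$. Write $S^{\pm}(x) = |\{n \le x : g(n) = \pm 1\}|$, $S^0(x) = S^+(x) + S^-(x) = \sum_{n\le x}|g(n)|$, and $T(x) = S^+(x) - S^-(x) = \sum_{n \le x} g(n)$, and denote $P(x) = x\prod_p(1-1/p)(1 + |g(p)|/p + |g(p^2)|/p^2 + \cdots)$. The assumption $K(x)\to 0$ controls $\sum_{g(p)=0} 1/p$ in terms of $K$ alone, so the infinite product converges to a constant in $(0,\infty)$ and $P(x) \asymp x$. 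The lemma therefore reduces to proving
\[
S^0(x) = (1+o(1))\,P(x) \qquad\text{and}\qquad T(x) = o(P(x)),
\]
in both cases with rates depending only on $K$ and $L$; the conclusion follows from $S^{\pm} = (S^0 \pm T)/2$.

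For the oscillating sum $T(x)$ I would invoke an effective form of the Hal\'asz--Wirsing mean-value theorem for real multiplicative functions bounded by $1$, of the shape
\[
\Big|\frac{1}{x}\sum_{n\le x} g(n)\Big| \ll (1+M(x))\,e^{-c\,M(x)} + (\log x)^{-1/2},
\]
with an absolute $c>0$ and $M(x) := \sum_{p\le x}(1-g(p))/p$. The primes where $g(p) = -1$ alone contribute at least $2L(x)$ to $M(x)$, so $M(x) \to \infty$ at a rate controlled by $L$, and hence $T(x) = o(x) = o(P(x))$ quantitatively in $L$.

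For the nonnegative sum $S^0(x)$, the function $|g|$ is multiplicative with values in $\{0,1\}$. Its Dirichlet series factors as $\sum_n |g(n)| n^{-s} = \zeta(s)\, H(s)$, where
\[
H(s) = \prod_p (1-1/p^s)\Big(1 + \frac{|g(p)|}{p^s} + \frac{|g(p^2)|}{p^{2s}} + \cdots\Big)
\]
extends to a bounded holomorphic function on $\re s > 1/2$, because the local factor at $p$ differs from $1$ by $O(1/p^2)$ except when $g(p)=0$, and the contribution of those primes is controlled by $K$. A standard convolution or Tauberian argument then yields $S^0(x) = H(1)\,x + o(x) = (1+o(1))P(x)$, again with rate determined solely by $K$.

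Combining the two estimates gives $S^{\pm}(x) = \tfrac12(S^0(x) \pm T(x)) = (\tfrac12 + o(1))P(x)$, which is the assertion of the lemma. The main obstacle I anticipate is quantitative uniformity: the lemma demands that the $o(1)$ depend only on $K$ and $L$, which forces one to use effective (rather than merely qualitative) versions of both the Hal\'asz inequality and the mean-value theorem for nonnegative multiplicative functions, with every implied constant and error term expressible in terms of $K$ and $L$ alone. The secondary issue of prime powers $p^k$ with $k\ge 2$ enters both estimates only as absolutely convergent corrections and should be a matter of routine bookkeeping.
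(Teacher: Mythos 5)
Your overall strategy coincides with the paper's: write the two counts as $\tfrac12(S^0\pm T)$ with $T=\sum_{n\le x}g(n)$ and $S^0=\sum_{n\le x}|g(n)|$, kill $T$ by an effective Hal\'asz-type bound (the primes with $g(p)=-1$ force $\sum_{p\le x}(1-g(p))/p\ge 2L(x)\to\infty$), and evaluate $S^0$ by a mean value theorem for nonnegative multiplicative functions; the paper does exactly this, quoting a Hall--Tenenbaum form of Hal\'asz and a uniform version of Wirsing's theorem from Tenenbaum. The one step of yours that does not survive scrutiny is the analytic input for $S^0$. It is \emph{not} true under the hypotheses that $H(s)=\prod_p(1-p^{-s})\bigl(1+|g(p)|p^{-s}+|g(p^2)|p^{-2s}+\cdots\bigr)$ extends to a bounded holomorphic function on $\re s>1/2$: at a prime with $g(p)=0$ the local factor is $1-p^{-s}+O(p^{-2\sigma})$, and the only control on the set $S=\{p:g(p)=0\}$ is $\sum_{p\in S}1/p\le K(2)<\infty$. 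Convergence of $\sum_{p\in S}p^{-1}$ does not imply convergence of $\sum_{p\in S}p^{-\sigma}$ for any fixed $\sigma<1$ (for instance $S$ may contain about $2^k/k^2$ primes from each dyadic block $[2^k,2^{k+1}]$), so $\log H$ can diverge on every line $\re s=\sigma<1$. Moreover, resting the asymptotic for $S^0$ on an unspecified Tauberian theorem is risky for exactly the feature the lemma insists on: Wiener--Ikehara-type statements are ineffective, while here the $o(1)$ must depend only on $K$ and $L$, uniformly in $g$.

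The gap is local and fixable, and the fix lands you essentially on the paper's proof. All you need is absolute convergence of $H$ at $s=1$: writing $c=\mu*|g|$, one has $|c(p)|=\mathbf{1}_{g(p)=0}$ and $|c(p^\nu)|\le 1$ for $\nu\ge 2$, so $\sum_d |c(d)|/d\le\exp\bigl(K(2)+O(1)\bigr)$, and a short splitting (large prime factor from $S$ versus large squarefull part, plus a Rankin shift) shows the tails $\sum_{d>D}|c(d)|/d$ tend to $0$ at a rate governed by $K$ alone. The elementary hyperbola identity $\sum_{n\le x}|g(n)|=\sum_{d\le x}c(d)\lfloor x/d\rfloor$ then gives $S^0(x)=H(1)x+o(x)$ uniformly in $g$, with $H(1)$ equal to the Euler product in the statement; this is precisely the content of the paper's Lemma~\ref{le:realDel}, which it imports from Tenenbaum with the remark that the required uniformity is visible in the proof. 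With that substitution (or by citing such a uniform Wirsing/Tenenbaum theorem directly, as the paper does), your argument is correct and is the same argument as the paper's.
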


For the proof of this lemma, we use two results concerning averages of multiplicative functions. The first lemma (see \cite{HT91} though a good enough result for our purposes would also quickly follow from Hal\'asz's theorem) shows that a real-valued multiplicative function $g\colon \N \to [-1, 1]$ is small on average unless it ``pretends'' to be the constant function $1$.

\begin{lem}
\label{le:Halreal}
There exists an absolute positive constant $C$ such that, for any multiplicative function $g$ with $-1 \leq g(n) \leq 1$, one has
\[
\sum_{n \leq x} g(n) \leq C \cdot x \exp\left(-\frac{1}{4} \sum_{p \leq x} \frac{1-g(p)}{p}\right)
\]
for all $x \geq 2$.
\end{lem}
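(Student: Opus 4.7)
The strategy is to deduce this inequality from the general (complex) Halász mean-value theorem, exploiting real-valuedness of $g$ to reduce the twist optimization to the untwisted sum $\sum_{p \leq x}(1-g(p))/p$.

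First, I invoke Halász's theorem in its standard form (see, e.g., Chapter III.4 of Tenenbaum's \emph{Introduction to Analytic and Probabilistic Number Theory}): for any multiplicative $g \colon \N \to \Cb$ with $|g(n)| \leq 1$ and any $x \geq 2$,
\[
\Big|\frac{1}{x}\sum_{n \leq x} g(n)\Big| \ll (1+\Mcc)\,e^{-\Mcc} + \frac{1}{\sqrt{\log x}},
\]
with absolute implied constant, where $\Mcc := \min_{|t| \leq \log x}\sum_{p \leq x}(1 - \re(g(p)p^{-it}))/p$.

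For real-valued $g$ the key reduction is the inequality
\[
\Mcc \geq \tfrac{1}{2}\sum_{p\leq x}\frac{1-g(p)}{p} - C_1
\]
for an absolute $C_1$. The crucial input is the symmetry $\Mcc_t := \sum_{p\leq x}(1 - g(p)\cos(t \log p))/p = \Mcc_{-t}$, which holds because $g$ is real. Letting $t_0$ denote a minimizer and writing $D(f,h;x) := (\sum_{p \leq x}(1 - \re(f(p)\overline{h(p)}))/p)^{1/2}$ for the Granville--Soundararajan pretentious pseudo-distance, the triangle inequality delivers both
\[
D(g,1;x) \leq D(g, n^{it_0};x) + D(1, n^{it_0};x) = \sqrt{\Mcc} + D(1, n^{it_0};x)
\]
and (via the symmetry)
\[
D(1, n^{2it_0};x) \;=\; D(n^{-it_0}, n^{it_0};x) \;\leq\; 2D(g, n^{it_0};x) \;=\; 2\sqrt{\Mcc}.
\]
A short case analysis -- separating the regime where $|t_0|$ is small (so that $\Mcc_{t_0}$ and $\Mcc_0 := \sum_p(1-g(p))/p$ agree up to $O(1)$) from the regime where the second inequality forces $D(1, n^{it_0};x)$ to be bounded by a multiple of $\sqrt{\Mcc}$ -- allows one to square the first inequality, apply AM-GM, and deduce the claimed reduction.

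Combining the two steps with the elementary inequality $(1+u)e^{-u} \ll e^{-u/2}$ gives $|x^{-1}\sum g(n)| \ll \exp(-\tfrac{1}{4}\sum_{p \leq x}(1-g(p))/p) + 1/\sqrt{\log x}$, and the Halász error is absorbed via the uniform bound $\sum_{p \leq x}(1 - g(p))/p \leq 2\sum_{p \leq x} 1/p \leq 2\log\log x + O(1)$, which forces the claimed exponential to be $\gg 1/\sqrt{\log x}$. The main obstacle is the reduction step itself: controlling the Halász twist minimum by the untwisted sum for real $g$. For complex $g$ no such bound holds (as $g(n) = n^{it}$ shows), so the symmetry $\Mcc_t = \Mcc_{-t}$ enforced by real-valuedness is genuinely the crucial input.
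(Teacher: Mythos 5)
Your overall strategy (complex Hal\'asz plus a lower bound on the twist minimum $\Mcc$ in terms of the untwisted quantity $M_0:=\sum_{p\leq x}(1-g(p))/p$) is the route the paper alludes to when it says a ``good enough result would also quickly follow from Hal\'asz's theorem''; the paper itself gives no proof but simply cites Hall--Tenenbaum [HT91]. However, your key reduction
\[
\Mcc \;\geq\; \tfrac{1}{2}\,M_0 - C_1
\]
is false. Take any fixed $t\neq 0$ and let $g$ be the completely multiplicative function with $g(p)=\operatorname{sgn}(\cos(t\log p))$ (or a smoothed version if you prefer to avoid the discontinuity). Since $t\log p$ equidistributes modulo $2\pi$ with respect to the weights $1/p$ (because $\sum_{p\leq x}p^{-ikt}/p=O_{kt}(1)$ for $kt\neq 0$), one gets
\[
M_0=\sum_{p\leq x}\frac{1-g(p)}{p}=\log\log x+O(1),\qquad
\sum_{p\leq x}\frac{1-g(p)\cos(t\log p)}{p}=\Big(1-\frac{2}{\pi}\Big)\log\log x+O(1),
\]
since the mean values of $1-\operatorname{sgn}(\cos\theta)$ and of $1-|\cos\theta|$ over the circle are $1$ and $1-2/\pi\approx 0.3634$ respectively. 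Hence $\Mcc\leq (1-\tfrac{2}{\pi}+o(1))M_0<\tfrac12 M_0-C_1$ once $x$ is large. The underlying extremal problem (minimize $\int(1-h(\theta)\cos\theta)\,d\theta\big/\int(1-h(\theta))\,d\theta$ over $h:[0,2\pi)\to[-1,1]$) has minimum $0.3286\ldots$, the Hall--Tenenbaum constant, so the best true constant in your reduction is $0.3286\ldots$, not $\tfrac12$.

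This matters for the statement as given. Your triangle-inequality mechanism, even repaired, cannot reach the constant $\tfrac14$ in the exponent: squaring $D(g,1)\leq D(g,n^{it_0})+D(1,n^{it_0})$ and then controlling $D(1,n^{it_0})$ via $D(1,n^{2it_0})\leq 2\sqrt{\Mcc}$ requires the reverse-direction inequality $D(1,n^{it_0})\ll D(1,n^{2it_0})+O(1)$ (which needs $\zeta$-information near the $1$-line, not just symmetry), and the losses in that chain leave you with something like $\Mcc\geq M_0/10-O(1)$, hence an exponent of order $1/20$ rather than $1/4$. Such a weaker bound would in fact be perfectly adequate for the only use the paper makes of this lemma (showing $\sum_{n\leq x}g(n)=o(x)$ when $\sum_{p\leq x,\,g(p)=-1}1/p\to\infty$), and the authors say as much; but to prove the inequality with the stated constant $\tfrac14$ you need the genuine Hall--Tenenbaum analysis of the extremal problem above (yielding the optimal constant $0.3286\ldots>\tfrac14$), which is exactly the content of the reference [HT91] that the paper invokes. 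So either solve that extremal problem (or quote [HT91]), or weaken the constant in the statement and note that this suffices downstream.
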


The second lemma concerns average of a positive-valued multiplicative function.
\begin{lem}
\label{le:realDel}
Let $\varepsilon > 0$ and let $K \colon \R_+ \to \R_+$ be such that $K(x) \to 0$ for $x \to \infty$. There exists a positive constant $x_0$ (depending on $\varepsilon$ and $K(x)$) such that if $g \colon \N \to [0,1]$ is any multiplicative function for which $\sum_{p > x} \frac{1-g(p)}{p} \leq K(x)$ for all $x \geq 1$, then
\[
\left|\sum_{n \leq x} g(n) - x M(g)\right| < \varepsilon x \quad \text{for all $x \geq x_0$},
\]
where
\[
M(g) = \prod_{p \in \Pb} \left(1-\frac{1}{p}\right)\left(1+\frac{g(p)}{p} + \frac{g(p^2)}{p^2} + \dotsb\right).
\]
\end{lem}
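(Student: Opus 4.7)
The plan is a convolution argument. Write $g=1*h$, where $h$ is the multiplicative function defined by $h(p^k)=g(p^k)-g(p^{k-1})$ for $k\geq 1$; in particular $|h(p)|=1-g(p)$ and $|h(p^k)|\leq 1$ for all $k$. Swapping the order of summation gives
\[
\sum_{n\leq x}g(n)=\sum_{d\leq x}h(d)\Bigl\lfloor\tfrac{x}{d}\Bigr\rfloor=x\sum_{d\leq x}\frac{h(d)}{d}+O\Bigl(\sum_{d\leq x}|h(d)|\Bigr),
\]
and comparing Euler factors yields $\sum_{d=1}^{\infty}h(d)/d=M(g)$. Hence the main term $xM(g)$ splits off, and it remains to show that
\[
E_1:=\sum_{d\leq x}|h(d)| \quad\text{and}\quad E_2:=x\Bigl|\sum_{d>x}\frac{h(d)}{d}\Bigr|
\]
are each at most $\tfrac{\varepsilon}{2}x$ for $x$ large in terms of $\varepsilon$ and $K$.

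For $E_1$ I would apply Lemma~\ref{le:Halreal} to $|h|$, which is multiplicative with values in $[0,1]$. Since $1-|h(p)|=g(p)$ and the hypothesis at $x=1$ forces $\sum_p(1-g(p))/p\leq K(1)$, Mertens' theorem gives $\sum_{p\leq x}(1-|h(p)|)/p=\log\log x+O_K(1)$, and therefore
\[
E_1\;\ll\; x\exp\Bigl(-\tfrac14\sum_{p\leq x}\tfrac{1-|h(p)|}{p}\Bigr)\;\ll_K\;\frac{x}{(\log x)^{1/4}},
\]
which is $<\tfrac{\varepsilon}{2}x$ once $x$ is sufficiently large in terms of $\varepsilon$ and $K(1)$.

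For $E_2$ I would show $\sum_{d>x}|h(d)|/d\to 0$ uniformly in $g$. Set $y=x^{1/A}$ with $A=\log\log x$ and split according to the largest prime factor $P^+(d)$. Using $\sum_{p>y}|h(p)|/p\leq K(y)$,
\[
\sum_{P^+(d)>y}\frac{|h(d)|}{d}=P_y\bigl(Q_y-1\bigr),\qquad Q_y:=\prod_{p>y}\Bigl(1+\tfrac{|h(p)|}{p}+O(p^{-2})\Bigr),
\]
and $\log Q_y\leq K(y)+O(y^{-1})$, so this contribution is $\ll_K K(y)+y^{-1}=o_K(1)$ as $y\to\infty$. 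For $y$-smooth $d>x$ apply Rankin's trick with $\alpha=A/\log x$: since $y^{\alpha}=e$, we have $|h(p)|/p^{1-\alpha}\leq e|h(p)|/p$ for $p\leq y$, giving
\[
\sum_{\substack{d>x\\P^+(d)\leq y}}\frac{|h(d)|}{d}\leq x^{-\alpha}\prod_{p\leq y}\Bigl(1+\frac{|h(p)|}{p^{1-\alpha}}+O(p^{-2+2\alpha})\Bigr)\ll_K e^{-A}=\frac{1}{\log x}.
\]
Combining both pieces, $E_2=o_K(x)$ uniformly in $g$.

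The main technical obstacle is arranging the tail estimate uniformly: the Hal\'asz-type bound $\sum_{d\leq x}|h(d)|\ll_K x/\log x$ is too weak to recover $\sum_{d>x}|h(d)|/d\to 0$ by partial summation, because the resulting integral $\int_x^{\infty}dt/(t\log t)$ diverges. This forces the above splitting-by-smoothness plus Rankin argument, with $A=\log\log x$ chosen so that both $K(x^{1/A})\to 0$ and $e^{-A}\to 0$ as $x\to\infty$.
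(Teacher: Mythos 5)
Your proof is correct, and it is genuinely different in character from what the paper does: the paper disposes of this lemma in one line, citing Tenenbaum (Theorem 11 in Section I.3.8) for the non-uniform statement and asserting that the uniformity in $g$ is visible from that proof. You instead give a self-contained argument: writing $g = 1*h$ with $h(p^k)=g(p^k)-g(p^{k-1})$, using the hyperbola-style identity $\sum_{n\le x}g(n)=\sum_{d\le x}h(d)\lfloor x/d\rfloor$, controlling $\sum_{d\le x}|h(d)|$ by the paper's Lemma~\ref{le:Halreal} (since $\sum_{p\le x}\frac{1-|h(p)|}{p}=\sum_{p\le x}\frac{g(p)}{p}\ge \log\log x-K(1)+O(1)$), and handling the tail $\sum_{d>x}|h(d)|/d$ by splitting at $P^+(d)\lessgtr x^{1/\log\log x}$ and applying Rankin's trick on the smooth part. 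This makes the claimed uniformity completely explicit — the error depends only on the absolute constant in Lemma~\ref{le:Halreal}, on $K(1)$, and on $K$ evaluated at a point tending to infinity — which is exactly the point the paper leaves to the reader; the price is a page of computation where the paper has a citation. Two small points to tidy: the identity $\sum_{P^+(d)>y}|h(d)|/d=P_y(Q_y-1)$ should be an inequality $\le$ (and $P_y$, the Euler product over $p\le y$ of $1+\sum_{k\ge1}|h(p^k)|/p^k$, should be noted to be $\ll_K 1$); and before ``comparing Euler factors'' to get $\sum_{d\ge1}h(d)/d=M(g)$ you should record that $\sum_d |h(d)|/d\ll_K 1$ (which follows from $\sum_p(1-g(p))/p\le K(1)$ plus the trivial bound on prime powers), so that the Euler product identity is legitimate and the tail estimate makes sense. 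Neither is a gap, just bookkeeping.
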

\begin{proof}
A version which is non-uniform in $g$ can be found in \cite[Theorem 11 in Section I.3.8]{Tenenbaum95}. However it is easy to see from the proof that the claimed uniformity holds.
\end{proof}

\begin{proof}[Proof of Lemma~\ref{le:multfunc+-1}]
Lemma~\ref{le:Halreal} implies that $\sum_{n \leq x} g(n) = o(x)$ and Lemma~\ref{le:realDel} that
\[
\sum_{n \leq x} |g(n)| = (1+o(1)) x \prod_{p \in \Pb} \left(1-\frac{1}{p}\right) \left(1 + \frac{|g(p)|}{p} + \frac{|g(p^2)|}{p^2} + \dotsb \right),
\]
so the claim follows since $g(n)$ only takes values in $\{-1, 0, 1\}$.
\end{proof}

Lemma~\ref{le:multfunc+-1} immediately implies the following result on signs of multiplicative functions.
\begin{lem}
\label{le:multfunc><0}
Let $K, L \colon \R_+ \to \R_+$ be such that $K(x) \to 0$ and $L(x) \to \infty$ for $x \to \infty$. Let $g: \N \to \R$ be a multiplicative function such that, for every $x \geq 2$,
\[
\sum_{\substack{p \geq x \\ g(p)=0}} \frac{1}{p} \leq K(x) \quad \text{and} \quad \sum_{\substack{p \leq x \\ g(p) < 0}}\frac{1}{p} \geq L(x).
\]
Then
\[
\begin{split}
|\{n \leq x \colon g(n) > 0\}| &= (1+o(1))|\{n \leq x \colon g(n) < 0\}|\\
&= \left(\frac{1}{2}+o(1)\right) x \prod_{p \in \Pb} \left(1-\frac{1}{p}\right) \left(1 + \frac{h(p)}{p} + \frac{h(p^2)}{p^2} + \dotsb \right),
\end{split}
\]
where $h(n)$ is the characteristic function of those $n$ for which $g(n) \neq 0$, and $o(1) \to 0$ when $x \to \infty$ and the rate of convergence depends only on $K(x)$ and $L(x)$ but not on $g$.
\end{lem}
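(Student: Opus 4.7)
The plan is to reduce Lemma~\ref{le:multfunc><0} directly to Lemma~\ref{le:multfunc+-1} by collapsing $g$ to its sign. Set $\tilde g(n) := \mathrm{sgn}(g(n))$, so $\tilde g \colon \N \to \{-1,0,1\}$. Since the sign function is totally multiplicative on the reals ($\mathrm{sgn}(ab) = \mathrm{sgn}(a)\mathrm{sgn}(b)$, with both sides zero when one factor vanishes) and $g$ is multiplicative, $\tilde g$ is multiplicative. Also, $\tilde g(p) = 0$ iff $g(p) = 0$, and $\tilde g(p) = -1$ iff $g(p) < 0$, so the hypotheses
\[
\sum_{\substack{p \geq x \\ \tilde g(p)=0}} \frac{1}{p} \leq K(x) \quad \text{and} \quad \sum_{\substack{p \leq x \\ \tilde g(p) = -1}}\frac{1}{p} \geq L(x)
\]
hold with the same bounding functions $K,L$.

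Next I would invoke Lemma~\ref{le:multfunc+-1} applied to $\tilde g$. By the definition of $\mathrm{sgn}$,
\[
\{n \leq x : \tilde g(n) = 1\} = \{n \leq x : g(n) > 0\}, \qquad \{n \leq x : \tilde g(n) = -1\} = \{n \leq x : g(n) < 0\},
\]
so the two cardinality equalities of Lemma~\ref{le:multfunc><0} follow immediately from the corresponding conclusions for $\tilde g$. For the product it suffices to observe that $|\tilde g(p^k)| = 1$ precisely when $g(p^k) \neq 0$, which is exactly $h(p^k)$, so the Euler products match term by term. The claimed uniformity of the $o(1)$ in $K$ and $L$ (and not in $g$) carries over because the uniformity in Lemma~\ref{le:multfunc+-1} is itself of this form, and the reduction only uses the pair $(K,L)$.

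I do not expect any real obstacle here: the statement is essentially a tautological consequence of Lemma~\ref{le:multfunc+-1} once one notices that the sign function is multiplicative, so the entire analytic content sits in the two external inputs (Lemmas~\ref{le:Halreal} and \ref{le:realDel}) used to prove Lemma~\ref{le:multfunc+-1}, and nothing further needs to be done in the present proof.
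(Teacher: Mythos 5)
Your proposal is correct and is exactly the paper's argument: the paper likewise applies Lemma~\ref{le:multfunc+-1} to the function that is $0$ when $g(n)=0$ and $g(n)/|g(n)|$ otherwise, i.e.\ to $\mathrm{sgn}(g)$, and you have simply spelled out the routine verifications (multiplicativity of the sign, transfer of the hypotheses, matching of the Euler products, and uniformity in $K,L$).
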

\begin{proof}
Apply Lemma~\ref{le:multfunc+-1} to the multiplicative function which is $0$ for those $n$ for which $g(n) = 0$ and $g(n)/|g(n)|$ for those $n$ for which $g(n) \neq 0$.
\end{proof}

To prove the long interval result we still need the following lemma.
\begin{lem}
\label{le:posproplam><0}
One has $\lambda_f(p) < 0$ for a positive proportion of primes.
\end{lem}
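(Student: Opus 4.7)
The plan is to combine first- and second-moment information about $\lambda_f(p)$ with a pointwise upper bound on $|\lambda_f(p)|$. I would begin by recording two standard analytic inputs. From the prime number theorem for $L(s, f)$ (analytic continuation to $\re s \geq 1$ and nonvanishing on $\re s = 1$),
\[
\sum_{p \leq x} \lambda_f(p) = o(\pi(x));
\]
and from the factorization $L(s, f \otimes f) = \zeta(s) L(s, \Sym^2 f)$ together with the prime number theorem for $L(s, \Sym^2 f)$ (automorphic by Gelbart--Jacquet for both holomorphic and Maass cusp forms on the full modular group),
\[
\sum_{p \leq x} \lambda_f(p)^2 \sim \pi(x).
\]

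In the holomorphic case Deligne's bound $|\lambda_f(p)| \leq 2$ would close the argument at once. Setting $P^{\pm}(x) = \{p \leq x : \pm \lambda_f(p) > 0\}$ and supposing for contradiction that $|P^-(x)| = o(\pi(x))$, the contribution of $P^-(x)$ to the second moment is at most $4|P^-(x)| = o(\pi(x))$, so the mass of $\sum \lambda_f(p)^2$ would have to concentrate on $P^+(x)$. But on $P^+(x)$ the Deligne bound yields $\lambda_f(p)^2 \leq 2\lambda_f(p)$, hence
\[
\sum_{p \in P^+(x)} \lambda_f(p)^2 \leq 2\sum_{p \in P^+(x)} \lambda_f(p) = 2\sum_{p \leq x}\lambda_f(p) + 2\sum_{p \in P^-(x)}|\lambda_f(p)| = o(\pi(x)),
\]
contradicting the second display above.

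For Maass forms Deligne's bound is unavailable, so I would introduce a truncation controlled by the fourth moment. Expanding via the Hecke relations, $\lambda_f(p)^4 = \lambda_f(p^4) + 3\lambda_f(p^2) + 2$, and applying the PNT to $L(s, \Sym^k f)$ for $k = 2,4$ — here invoking Kim's theorem on the automorphy of $\Sym^4$ of a $\text{GL}(2)$ cuspidal representation — I would obtain $\sum_{p \leq x} \lambda_f(p)^4 \sim 2\pi(x)$. For a sufficiently large constant $T$, Chebyshev then bounds the contribution of $\{p \leq x : |\lambda_f(p)| > T\}$ to $\sum \lambda_f(p)^2$ and to $\sum |\lambda_f(p)|$ by $O(\pi(x)/T^2)$ and $O(\pi(x)/T^3)$ respectively; restricting the holomorphic argument to $\{p \leq x : |\lambda_f(p)| \leq T\}$ (with $2$ replaced by $T$ throughout) would then produce the required contradiction once $T$ is chosen large enough.

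The main obstacle is really the Maass case: without a uniform pointwise bound on $|\lambda_f(p)|$ one is forced into the fourth-moment truncation, which relies on Kim's automorphy of $\Sym^4$ as a nontrivial black box. Everything else is an elementary moment manipulation, and by partial summation the resulting bound $|P^-(x)| \gg \pi(x)$ immediately upgrades to the divergence of $\sum_{p \leq x,\, \lambda_f(p)<0} 1/p$ needed to feed Lemma~\ref{le:multfunc><0}.
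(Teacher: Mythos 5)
Your argument is correct, but it is a genuinely different route from the paper's: the paper disposes of this lemma in one line, citing either Sato--Tate or Murty's unconditional theorem \cite[Theorem 4(iii)]{Murty83}, whereas you give a self-contained moment argument. Your inputs --- the prime number theorem for $L(s,f)$, for $L(s,\Sym^2 f)$ (Gelbart--Jacquet), and in the Maass case for $L(s,\Sym^4 f)$ (Kim, with the holomorphy and nonvanishing at $\re s=1$ that the paper itself invokes in Lemma~\ref{le:prel}(iv)) --- are exactly the tools already present in the paper, so nothing new is imported, and your identity $\lambda_f(p)^4=\lambda_f(p^4)+3\lambda_f(p^2)+2$ and the Chebyshev truncation at $|\lambda_f(p)|\le T$ are sound; in effect you are reproving (a version of) Murty's cited result rather than quoting it. What each approach buys: the paper's citation is shortest, while your proof keeps the argument uniform across holomorphic and Maass forms and makes the proportion quantitative. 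One small streamlining of your Maass case: from $\sum_{p\le x}\lambda_f(p)^2\sim\pi(x)$ and $\sum_{p\le x}\lambda_f(p)^4\sim 2\pi(x)$, H\"older gives $\sum_{p\le x}|\lambda_f(p)|\ge \bigl(\sum\lambda_f(p)^2\bigr)^{3/2}\bigl(\sum\lambda_f(p)^4\bigr)^{-1/2}\gg\pi(x)$, hence $\sum_{p\in P^-(x)}|\lambda_f(p)|=\tfrac12\bigl(\sum|\lambda_f(p)|-\sum\lambda_f(p)\bigr)\gg\pi(x)$, and then Cauchy--Schwarz yields $|P^-(x)|\ge\bigl(\sum_{P^-}|\lambda_f(p)|\bigr)^2/\sum_{P^-}\lambda_f(p)^2\gg\pi(x)$ with no truncation parameter at all; your closing partial-summation remark then correctly supplies the divergence of $\sum_{\lambda_f(p)<0}1/p$ required by Lemma~\ref{le:multfunc><0}.
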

\begin{proof}
This is a direct consequence of the Sato-Tate Conjecture, but follows also without such deep information for instance from \cite[Theorem 4(iii)]{Murty83} (even with $m=0$ there).
\end{proof}

\begin{proof}[Proof of Theorem~\ref{th:longintf}]
Theorem~\ref{th:longintf} follows immediately from Lemma~\ref{le:multfunc><0} together with non-lacunarity and Lemma \ref{le:posproplam><0}.
\end{proof}

\section{Outline of the proof of Theorem~\ref{th:signchanges} (sign changes)}
\label{se:pfofsignchanges}
Let us start by collecting some basic facts about $\lambda_f(n)$ which will recur through the argument. 
\begin{lem}
\label{le:prel}
Let $f$ be a holomorphic Hecke cusp form for the full modular group or a Maass Hecke cusp form for the full modular group. Write $\lambda_f(n)$ for the normalized Fourier coefficients. Then
\begin{enumerate}[(i)]
\item  \label{it:nonzeroprop}
\[
\#\{n \leq x: \lambda_f(n) \neq 0\} \asymp
x \prod_{\substack{p \leq x \\ \lambda_f(p) = 0}} \Big ( 1 - \frac{1}{p} \Big ) \asymp x \prod_{\substack{p \leq x \\ \lambda_f(p) = 0}} \Big ( 1 + \frac{1}{p} \Big )^{-1}.
\]
\item \label{it:KSupbound} $\lambda_f(n) \ll n^{7/64}$.
\item \label{it:secmom} For every $z \geq w \geq 2$,
\[ 
\sum_{w \leq p \leq z} \frac{|\lambda_f(p)|^2}{p} = \sum_{w \leq p \leq z} \frac{1}{p} + O_f(1).
\]
\item \label{it:firmomlow} For every large enough $y$,
$$
\sum_{\substack{y \leq p \leq 2y \\ |\lambda_f(p)| \geq 1/2}} |\lambda_f(p)| \geq \frac{y}{10\log y}.
$$
\end{enumerate}
\end{lem}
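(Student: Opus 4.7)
The plan is to treat the four parts in turn, relying on the multiplicativity of $\lambda_f$ and on the theory of symmetric power $L$-functions of $f$.

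For (i), observe that $h(n) = 1_{\lambda_f(n) \neq 0}$ is multiplicative with $h(n) \in \{0,1\}$. The Hecke recursion $\lambda_f(p^{k+1}) = \lambda_f(p)\lambda_f(p^k) - \lambda_f(p^{k-1})$ yields, at primes with $\lambda_f(p) = 0$, that $\lambda_f(p^k)$ vanishes exactly when $k$ is odd; the local Euler factor of $h$ there is $1/(1 - 1/p^{2s})$, while at good primes it differs from $1/(1 - 1/p^s)$ by a factor $1 + O(1/p^{2s})$. A standard mean value estimate for nonnegative multiplicative functions then gives
\[
\sum_{n \leq x} h(n) \asymp x \prod_{p \leq x}\left(1 - \tfrac{1}{p}\right)\sum_{k \geq 0}\frac{h(p^k)}{p^k} \asymp x \prod_{\substack{p \leq x \\ \lambda_f(p) = 0}}\left(1 + \tfrac{1}{p}\right)^{-1}.
\]
Since $(1 - 1/p) = (1 + 1/p)^{-1}(1 - 1/p^2)$ and $\prod_p (1 - 1/p^2)$ converges, the two products displayed in (i) coincide up to absolute constants.

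Part (ii) is the Kim--Sarnak record bound on the Ramanujan conjecture for $\operatorname{GL}_2$, valid in both the holomorphic and Maass cases. For (iii) I would use the Rankin--Selberg identity $\sum_{n \geq 1}|\lambda_f(n)|^2 n^{-s} = \zeta(s) L(s, \operatorname{sym}^2 f)/\zeta(2s)$ together with the Hecke relation $|\lambda_f(p)|^2 = 1 + \lambda_f(p^2)$. Since $L(s, \operatorname{sym}^2 f)$ is entire and non-vanishing on $\operatorname{Re}(s) = 1$ by Shimura and Gelbart--Jacquet, a Mertens-style argument gives the convergence of $\sum_p \lambda_f(p^2)/p$, and (iii) follows by subtracting $\sum_{w \leq p \leq z} 1/p$.

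The main obstacle is (iv). Upgrading (iii) to the prime number theorem for $L(s, f \times f)$ gives $\sum_{y \leq p \leq 2y} |\lambda_f(p)|^2 \sim y/\log y$, and splitting according to whether $|\lambda_f(p)| < 1/2$ or $\geq 1/2$ shows that the large-$|\lambda|$ part contributes at least $(3/4 + o(1))\, y/\log y$ to the second moment. In the holomorphic case the Deligne bound $|\lambda_f(p)| \leq 2$ immediately converts this into the claimed lower bound on the first moment. In the Maass case only Kim--Sarnak's polynomial bound is available, and I would instead invoke the fourth moment $\sum_{y \leq p \leq 2y} \lambda_f(p)^4 \sim 2y/\log y$, obtained from the Hecke decomposition $\lambda_f(p)^4 = \lambda_f(p^4) + 3\lambda_f(p^2) + 2$ together with prime number theorems for $L(s, \operatorname{sym}^2 f)$ and $L(s, \operatorname{sym}^4 f)$, the latter via Kim's automorphy of $\operatorname{sym}^4$. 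H\"older's inequality in the form $\sum |\lambda|^2 \leq (\sum|\lambda|^4)^{1/3}(\sum |\lambda|)^{2/3}$ then gives $\sum_{|\lambda_f(p)| \geq 1/2} |\lambda_f(p)| \gg y/\log y$ with an explicit constant $(3/4)^{3/2}/\sqrt{2} \approx 0.46$, well above $1/10$. The Maass case of (iv) is the only genuinely nontrivial step, since the absence of Deligne's bound forces one to appeal to the analytic properties of the symmetric fourth power $L$-function.
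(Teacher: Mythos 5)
Parts (ii)--(iv) of your proposal are fine. For (ii) and (iii) you use the same inputs as the paper (Kim--Sarnak, the Hecke relation $|\lambda_f(p)|^2=1+\lambda_f(p^2)$ and the analytic properties of the symmetric square $L$-function). For (iv) the paper runs a single polynomial inequality, namely $\tfrac18\bigl(1+(x^2-1)-(x^4-3x^2+1)\bigr)\le \mathbf{1}_{|x|\ge 1/2}\,|x|$, through the prime number theorems for $\mathrm{sym}^2$ and $\mathrm{sym}^4$, uniformly in the holomorphic and Maass cases; your splitting (Deligne in the holomorphic case, H\"older against the fourth moment via Kim's automorphy of $\mathrm{sym}^4$ in the Maass case) uses the same symmetric-power inputs, is correct, and even yields a better constant. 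That part is a legitimate repackaging.

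The genuine gap is in (i), in the lower bound. A ``standard mean value estimate for nonnegative multiplicative functions'' (Shiu, Hal\'asz, Tenenbaum III.3.5, or the paper's Lemma 4.1) gives only the \emph{upper} bound $\sum_{n\le x}h(n)\ll x\prod_{p\le x}(1-\tfrac1p)\sum_{k\ge0}h(p^k)p^{-k}$; the matching lower bound is false for general multiplicative $h\colon\N\to\{0,1\}$. For instance, taking $h$ to be the indicator of $x^{1/u}$-smooth numbers gives a count $\rho(u)x=u^{-(1+o(1))u}x$, far below $x\prod_{x^{1/u}<p\le x}(1-\tfrac1p)\asymp x/u$ once $u$ is large. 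This is precisely the delicate point here: for Maass forms it is not known that $\{p:\lambda_f(p)=0\}$ is thin, so a priori $\sum_{\lambda_f(p)=0}1/p$ diverges and one is counting integers built from a possibly sparse set of allowed primes, a situation in which generic mean value theorems do not produce a lower bound of the expected order. The paper closes this by combining the fact that $\lambda_f(p)<0$ (in particular $\lambda_f(p)\neq0$) for a positive proportion of primes with Theorem~1 of Granville--Koukoulopoulos--M\'atom\"aki (``when the sieve works''), which gives $\gg x\prod_{p\le x,\ \lambda_f(p)=0}(1-\tfrac1p)$ integers all of whose prime factors lie in the allowed set. Your Euler-factor observation at the bad primes (that $\lambda_f(p^k)$ vanishes exactly for odd $k$) is correct but only feeds the upper bound; to repair the argument you need an input of this sieve-lower-bound type, at least in the Maass case (in the holomorphic non-CM case Serre's theorem makes the bad primes sparse enough that the product is $\asymp1$ and a positive-proportion statement suffices).
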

\begin{proof}
\begin{enumerate}[(i)]
\item The second asymptotic equality is trivial, so it is enough to prove the first one. The upper bound is an immediate consequence of an upper bound sieve and the lower bound follows for instance from \cite[Theorem 1]{GrKoMa} together with Lemma~\ref{le:posproplam><0}.
\item See \cite[Appendix 2]{Kim03}.
\item By Hecke relation, and since $\lambda_f(p)$ are real,
\[
\sum_{w \leq p \leq z} \frac{|\lambda_f(p)|^2}{p} = \sum_{w \leq p \leq z} \frac{\lambda_f(p^2)+1}{p},
\]
and the claim follows since the second symmetric power $L$-function is cuspidal automorphic (by \cite{GJ78}) and thus holomorphic and non-vanishing at $s = 1$ (see e.g. \cite[Section 5.12]{IwKo04}). 
\item
The proof is very similar to~\cite[Lemma 4.1]{Kowalski}. 
Let us first note that 
\[
\frac{1}{8}\left(1 + (x^2-1) - (x^4-3x^2+1)\right) = -\frac{x^4}{8} + \frac{x^2}{2} - \frac{1}{8} \leq 
\begin{cases} 
0 & \text{if $|x| \leq 1/2$;} \\
\frac{1}{2} & \text{if $|x| > 1/2$},
\end{cases}
\]
so that
\[
\begin{split}
&\sum_{\substack{y \leq p \leq 2y \\ |\lambda_f(p)| \geq 1/2}} |\lambda_f(p)| \geq \frac{1}{8} \sum_{\substack{y \leq p \leq 2y}} 1 + (\lambda_f(p)^2-1) - (\lambda_f(p)^4-3\lambda_f(p)^2+1) \\
&= \frac{1}{8} \sum_{\substack{y \leq p \leq 2y}} 1 + \lambda_f(p^2) - \lambda_f(p^4) = \left(\frac{1}{8} + o(1)\right) \frac{y}{\log y},
\end{split}
\]
by holomorphy and non-vanishing of second and fourth symmetric power $L$-functions (see
\cite{Kim03} and
~\cite[Page 194]{KimShahidi02}).
\end{enumerate}
\end{proof}

In this section we prove  Theorem~\ref{th:signchanges} assuming propositions which we will prove in Sections \ref{se:proofmoments}--\ref{se:prooflowbound}. As described in the introduction, the basic idea is to show incompatible bounds for mollified short interval sums~(\ref{eq:wnwn'sums}). Let us start by fixing our choices of $w_n$ and $w_n'$ and the associated notation. 

Fix $X \geq 1$ and set $y = X^{\delta}$ for some small enough $\delta$. Moreover set
$$
\mathcal{D}^{+} = \{ d = p_1 \cdots p_r : p_r < \cdots < p_1 \ , \ p_m \leq y_m \text{ for all odd $m$} \},
$$
where $y_m = y^{\frac{1}{2}(1-\gamma^2) \gamma^{m-1}}$ with a parameter $\gamma \in (0, 1)$, so that $\mathcal{D}^+ \subset [1, y]$.  Now $\rho^{+}(n) = \sum_{d | n , d \in \mathcal{D}^+} \mu(d)$ are Brun's sieve weights, so that writing $P^-(n)$ for the smallest prime factor of $n$, we have
$$
\mathbf{1}_{P^-(n)>y} \leq \rho^{+}(n).
$$
In particular $\rho^{+}(n) \geq 0$ for all $n$. 
We then define
$$
w_n := \sum_{\substack{a b = n \\ a \leq y, \lambda_f(a) \neq 0  \\ (a,b) = 1}} \mu^2(a) \rho^{+}(b) |\lambda_f(b)|
$$
and 
$$
w_n' := \sum_{\substack{a b = n \\ a \leq y,  \lambda_f(a) \neq 0 \\ P^{-}(b) > y }} \mu^2(a) |\lambda_f(b)|.
$$
Write also
\[
k(x) = \prod_{\substack{p \leq x \\ \lambda_f(p) = 0}} \Big ( 1 + \frac{1}{p} \Big ),
\quad \text{so that} \quad
\sum_{\substack{n \leq x \\ \lambda_f(n) \neq 0}} 1 \asymp X/k(x)
\]
by Lemma~\ref{le:prel}\eqref{it:nonzeroprop}.

Our goal is to compare the sums
\begin{equation} \label{eq:behavior}
\sum_{x \leq n \leq x + hk(x)} \text{sgn}(\lambda_f(n)) w_n \quad \text{and} \quad
\sum_{\substack{x \leq n \leq x + hk(x) \\ \lambda_f(n) \neq 0}} w_n'
\end{equation}
for a large constant $h$. 
Note that $w_n' \leq w_n$. Therefore if the first sum is smaller in absolute value then we have detected a sign change. Note also that $w_n'$ consists of only one term. 
We first need two results on moments of $w_n'$ and $w_n$. From these we will then deduce the behavior of the sums in (\ref{eq:behavior}) and the main result will then follow shortly. 
We will prove the propositions below in sections~\ref{se:proofmoments} and~\ref{se:wnmom} respectively. 
\begin{prop} \label{pr:moments}
We have, for all $x > 0$ large enough,
$$
\sum_{X \leq n \leq 2X} {w'_n} \gg X \prod_{\substack{p \leq X  \\ \lambda_f(p) = 0}} \Big (1  - \frac{1}{p} \Big ) \quad \text{and} \quad \sum_{X \leq n \leq 2X}
{w_n'}^2 \ll X \prod_{\substack{p \leq X \\ \lambda_f(p) = 0}}
\Big ( 1 - \frac{1}{p} \Big ). 
$$
\end{prop}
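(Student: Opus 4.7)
The plan is to exploit the fact that whenever $w_n'\neq 0$, the decomposition $n=ab$ appearing in the definition is \emph{unique}: the factor $a$ must be the $y$-smooth part of $n$ (necessarily squarefree, with $\lambda_f(p)\neq 0$ for every $p\mid a$) and $b$ the $y$-rough part. This reduces both moments to a clean double sum,
\[
\sum_{X\le n\le 2X}(w_n')^k \;=\; \sum_{\substack{a\le y,\,\mu^2(a)=1\\ p\mid a\Rightarrow\lambda_f(p)\neq 0}}\;\sum_{\substack{X/a\le b\le 2X/a\\ P^-(b)>y}}|\lambda_f(b)|^k
\qquad (k=1,2),
\]
separating the contributions of the smooth and rough parts.

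For the first moment I would restrict the inner sum to prime $b$. Since $a\le y=X^{\delta}$ with $\delta$ small, the interval $[X/a,2X/a]$ lies above $y$, so the condition $P^-(b)>y$ is automatic. Lemma~\ref{le:prel}\eqref{it:firmomlow} applied with $y'=X/a$ then gives
\[
\sum_{X/a\le p\le 2X/a}|\lambda_f(p)|\;\gg\;\frac{X/a}{\log X}.
\]
Summing over admissible $a$ and using a Mertens-type estimate on the multiplicative Euler product
\[
\sum_{\substack{a\le y,\,\mu^2(a)=1\\ p\mid a\Rightarrow\lambda_f(p)\neq 0}}\frac{1}{a}\;\asymp\;(\log y)\prod_{\substack{p\le y\\ \lambda_f(p)=0}}\Big(1-\tfrac{1}{p}\Big),
\]
together with $\log y/\log X=\delta$ and the obvious inequality $\prod_{p\le y}(1-1/p)\ge\prod_{p\le X}(1-1/p)$ (extended over primes with $\lambda_f(p)=0$), yields the required lower bound.

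For the second moment the goal is a uniform estimate
\[
\sum_{\substack{b\le M\\ P^-(b)>y}}\lambda_f(b)^2\;\ll\;\frac{M}{\log y}.
\]
The associated Dirichlet series $D_y(s)=\prod_{p>y}\bigl(1+\lambda_f(p)^2 p^{-s}+\lambda_f(p^2)^2 p^{-2s}+\cdots\bigr)$ factors via the Rankin--Selberg identity $\sum_n\lambda_f(n)^2 n^{-s}=\zeta(s)L(s,\mathrm{sym}^2 f)/\zeta(2s)$ as $D_y(s)=\zeta(s)G_y(s)$, where $G_y$ is holomorphic at $s=1$. Lemma~\ref{le:prel}\eqref{it:secmom}, combined with the holomorphy and non-vanishing of $L(s,\mathrm{sym}^2 f)$ at $s=1$, forces $\prod_{p\le y}(1+\lambda_f(p)^2/p+\cdots)\asymp\log y$, so $G_y(1)\asymp 1/\log y$; a Selberg--Delange or Shiu-type Tauberian argument then delivers the displayed bound. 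Feeding this back into the double sum produces $\sum(w_n')^2\ll X\prod_{p\le y,\,\lambda_f(p)=0}(1-1/p)$, and a second application of Lemma~\ref{le:prel}\eqref{it:secmom} controls $\sum_{y<p\le X,\,\lambda_f(p)=0}1/p$, which allows the range $p\le y$ to be replaced by $p\le X$ at the cost of a constant depending on $\delta$.

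The principal obstacle is obtaining the bound on the inner sum $\sum_{b\le M,\,P^-(b)>y}\lambda_f(b)^2$ with the sharp logarithmic dependence $1/\log y$, uniformly in $M\le 2X/a$. A naive Rankin-type estimate loses powers of $\log$; the correct constant requires tracking the Euler factors $Z_p(s)$ at the primes $p\le y$ in the Rankin--Selberg factorization, distinguishing the bounded contribution coming from $p\in E$ (where $Z_p(1)=1/(1-1/p^2)$) from the $\log y$-sized contribution coming from $p\notin E$ via Lemma~\ref{le:prel}\eqref{it:secmom}. The secondary subtlety of passing from $\prod_{p\le y}$ to $\prod_{p\le X}$ in the exceptional-prime product likewise rests on Lemma~\ref{le:prel}\eqref{it:secmom}, which prevents $\sum_{p>y,\,\lambda_f(p)=0}1/p$ from growing faster than $\sum_{p>y}1/p$ and hence keeps the two products comparable on the range $y=X^{\delta}\le p\le X$.
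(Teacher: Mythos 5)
Your first-moment argument is essentially the paper's, just reorganized: the paper likewise uses that $w_n'$ contains at most one term, restricts the rough part to a single large prime, invokes Lemma~\ref{le:prel}\eqref{it:firmomlow} for the prime sum, and needs a lower bound for the number of squarefree $a$ with $\lambda_f(a)\neq 0$ (the non-lacunarity input behind Lemma~\ref{le:prel}\eqref{it:nonzeroprop}); your Mertens-type $\asymp$ for $\sum 1/a$ is the same ingredient in harmonic-sum form, so that half is sound.

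For the second moment there is a genuine gap at exactly the step you flag as the principal obstacle. Your reduction needs the estimate $\sum_{b\le M,\,P^-(b)>y}\lambda_f(b)^2\ll M/\log y$ uniformly for $M$ up to $2X$ with $y=X^{\delta}$, and you propose to obtain it from the Rankin--Selberg factorization plus ``a Selberg--Delange or Shiu-type Tauberian argument''. As written this is an assertion, not a proof: any Tauberian/Selberg--Delange treatment must be uniform in $y$, which grows like a power of $M$, so one has to control the truncated Euler factor $\prod_{p\le y}\bigl(1+\lambda_f(p)^2p^{-s}+\cdots\bigr)^{-1}$ (equivalently $\prod_{p\le y}(1-p^{-s})$ after extracting $\zeta(s)$) on a contour near $\re s=1$, not merely at $s=1$; this product oscillates off the real axis, and for Maass forms the unbounded local parameters at prime powers add a further complication. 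None of this is carried out, and your own observation that a naive Rankin shift loses a factor $\log M$ shows the bound is not soft. The estimate itself is true, but the efficient route is elementary: since only an upper bound of the correct order is required and the summand is a nonnegative multiplicative function, the Halberstam--Richert/Tenenbaum mean-value bound (the paper's Lemma~\ref{le:MVmultfunct}) applied to $g(b)=\lambda_f(b)^2\mathbf{1}_{P^-(b)>y}$, with hypotheses checked via Lemma~\ref{le:prel}\eqref{it:KSupbound} and \eqref{it:secmom} (plus the Chebyshev-type bound $\sum_{p\le t}\lambda_f(p)^2\log p\ll t$ coming from the Rankin--Selberg $L$-function), gives $\ll M\prod_{p\le y}(1-1/p)\asymp M/\log y$ at once. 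In fact the paper short-circuits your whole decomposition: it majorizes $w_n'^2$ pointwise by a single multiplicative function (equal to $\mathbf{1}_{\lambda_f(p)\neq 0}$ at primes $p\le y$, vanishing at their higher powers, and equal to $|\lambda_f(p^\nu)|^2$ for $p>y$) and applies Lemma~\ref{le:MVmultfunct} once, so the smooth part $a$ never needs to be separated. If you replace your Tauberian step by that mean-value lemma, your argument closes; note also that the final passage from $\prod_{p\le y}$ to $\prod_{p\le X}$ requires nothing beyond $\sum_{y<p\le X}1/p=\log(1/\delta)+o(1)$, so Lemma~\ref{le:prel}\eqref{it:secmom} is not needed there.
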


Note that the second moment bound is trivial when $f$ is holomorphic since if
$P^-(b) > y = X^{\delta}$ then 
$|\lambda_f(b)| \leq \tau(b) \ll 2^{1/\delta}$ and consequently
$w_n' \ll 2^{2/\delta}$. Also for holomorphic forms $\sum_{\lambda_f(p) = 0} 1/p < \infty$
by a result of Serre.  
However the second moment estimate is less trivial for Maass forms.
We also establish
a bound for the second moment of $w_n$ which constitutes the technically 
hardest part of our proof. 
\begin{prop} \label{pr:upperbound2}
We have,
$$
\sum_{X \leq n \leq 2X} w_n^2 \ll X \prod_{\substack{p \leq X \\ \lambda_f(p) = 0}}
\Big (1 - \frac{1}{p} \Big ). 
$$
\end{prop}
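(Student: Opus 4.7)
The plan is to open up $w_n^2$ as a double convolution, parametrise the coincidence $a_1 b_1 = a_2 b_2 = n$, and then exploit the fact that Brun's sieve weight $\rho^+$ is supported on divisors $\leq y$ to separate the contribution of large primes $p > y$ (handled by a Rankin--Selberg estimate) from that of small primes $p \leq y$ (handled by a combinatorial local analysis).

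First, interchange summations to obtain
\[
\sum_{X \leq n \leq 2X} w_n^2 = \sum_{\substack{a_1, a_2 \leq y \\ \mu^2(a_i) = 1,\ \lambda_f(a_i) \neq 0}} \sum_{\substack{X \leq n \leq 2X \\ a_i \mid n,\ (a_i, n/a_i) = 1}} \rho^+(n/a_1)\, \rho^+(n/a_2)\, |\lambda_f(n/a_1) \lambda_f(n/a_2)|.
\]
Setting $d = (a_1, a_2)$, $a_i = d a_i'$, the divisibility and coprimality conditions force $n = d a_1' a_2' c$ with $(c, d a_1' a_2') = 1$ and $n/a_i = a_{3-i}' c$. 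Since $(a_i', c) = 1$, the multiplicativity of $\lambda_f$ on coprime arguments yields $|\lambda_f(n/a_1) \lambda_f(n/a_2)| = |\lambda_f(a_1') \lambda_f(a_2')|\, \lambda_f(c)^2$.

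The key structural observation is that, because $\mathcal{D}^+ \subset [1,y]$, writing $c = c_1 c_2$ with $c_1$ the $y$-smooth part of $c$ and $P^-(c_2) > y$, any divisor of $a_{3-i}' c$ lying in $\mathcal{D}^+$ must actually divide $a_{3-i}' c_1$: indeed such a divisor is $\leq y$, and so its prime factors cannot lie in $c_2$. Hence $\rho^+(a_{3-i}' c) = \rho^+(a_{3-i}' c_1)$, and the Brun weights decouple completely from the rough variable $c_2$. The resulting inner sum over $c_2$ of $\lambda_f(c_2)^2$, ranging over an interval of length $\asymp X/(d a_1' a_2' c_1)$ subject to $P^-(c_2) > y$ and $(c_2, d a_1' a_2' c_1) = 1$, is then evaluated via Rankin--Selberg; combining this with Lemma~\ref{le:prel}\eqref{it:secmom} to compute local factors, its size is bounded by $X/(d a_1' a_2' c_1)$ times the large-prime portion $\prod_{y < p \leq X,\, \lambda_f(p) = 0}(1 - 1/p)$ of the target saving. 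One is then left with a sum over the $y$-smooth variables $d, a_1', a_2', c_1$ involving $|\lambda_f(a_1') \lambda_f(a_2')|$, $\rho^+(a_1' c_1) \rho^+(a_2' c_1)$ and $\lambda_f(c_1)^2$, which must be shown to be bounded by the small-prime portion $\prod_{p \leq y,\, \lambda_f(p) = 0}(1 - 1/p)$; multiplying out prime by prime, the local factors at $p \leq y$ with $\lambda_f(p) \neq 0$ are $O(1)$, whereas at each $p \leq y$ with $\lambda_f(p) = 0$ the restriction $\lambda_f(a_i) \neq 0$ excludes $p$ from $a_1, a_2$ and the remaining contribution collapses to $O(1 - 1/p)$ through the coprimality constraint on $c_1$ and the vanishing of $\lambda_f(p)$.

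The main obstacle lies in this last combinatorial sum. Since $\rho^+$ is not multiplicative and $|\lambda_f(p^k)|$ may grow as fast as $p^{7k/64}$ by Lemma~\ref{le:prel}\eqref{it:KSupbound}, one must unfold $\rho^+(a_i' c_1)$ via the Brun definition and verify, prime by prime, that the combined local factors at every $p \leq y$ with $\lambda_f(p) \neq 0$ really are absolutely $O(1)$, with no logarithmic loss accumulating across small primes. A fundamental-lemma-type estimate for $\lambda_f^2$ on $y$-smooth integers (in the spirit of Lemma~\ref{le:realDel} but applied to the multiplicative function $|\lambda_f|^2$) is the natural tool for controlling $\sum_{c_1 \text{ $y$-smooth}} \rho^+(a_i' c_1)^2 \lambda_f(c_1)^2 / c_1$, and making this compatible with the double presence of $a_1', a_2'$ inside the Brun weights is the technical heart of the proof.
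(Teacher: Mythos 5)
Your algebraic setup is sound: writing $n=da_1'a_2'c$ with $d=(a_1,a_2)$, using multiplicativity to get $|\lambda_f(a_1')\lambda_f(a_2')|\lambda_f(c)^2$, and noting that every element of $\mathcal{D}^+$ is $y$-smooth so that $\rho^+(a_{3-i}'c)=\rho^+(a_{3-i}'c_1)$, are all correct. But the bookkeeping that follows is off, and the proof stops exactly where the proposition's difficulty lies. The rough sum $\sum_{c_2\le Z,\,P^-(c_2)>y}\lambda_f(c_2)^2$ is of size about $Z\prod_{p\le y}(1-1/p)\asymp Z/\log y$; it does not naturally produce the ``large-prime portion'' $\prod_{y<p\le X,\lambda_f(p)=0}(1-1/p)$, which is $\gg_\delta 1$. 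Your bound is therefore valid but lossy by a factor growing like $\delta\log y$, and as a consequence the intermediate goal you then impose --- that the remaining sum over $d,a_1',a_2',c_1$ be $\ll\prod_{p\le y,\lambda_f(p)=0}(1-1/p)$ --- is actually false: all terms are nonnegative, and the terms with $a_1'=a_2'=c_1=1$ alone contribute $\sum_{d\le y}\mu^2(d)\mathbf{1}_{\lambda_f(d)\ne 0}/d\gg\log y\prod_{p\le y,\lambda_f(p)=0}(1-1/p)$ (by the same sieve bound used in Section~\ref{se:proofmoments} plus partial summation). With correct accounting the smooth sum only needs to be $\ll\prod_{p\le y,\lambda_f(p)\ne 0}(1-1/p)^{-1}$, i.e.\ the $(a_1',a_2',c_1)$-sum must be $O(1)$ uniformly in $d$.

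That $O(1)$ bound is the whole content of the proposition, and you explicitly defer it (``the technical heart''). The obstacle is precisely that $\rho^+$ is not multiplicative and $\rho^+(a_1'c_1)\rho^+(a_2'c_1)$ can be of divisor-function size on smooth $c_1$, while $\lambda_f(c_1)^2$ fluctuates; neither Lemma~\ref{le:realDel} nor a generic fundamental-lemma statement controls the \emph{square} of a Brun weight against such a weight, so invoking them is not a proof. The paper never opens the square in your way: it uses the pointwise combinatorial inequality $\rho^+(m)-\mathbf{1}_{P^-(m)>y}\le\sum_{r\ge 0}2^{-(2r+1)}4^{\omega(m)}\mathbf{1}_{P^-(m)>y_{2r+1}}$, deduces $w_n\le w_n'+w_n''$ with $w_n''\le\sum_r 2^{-2r}G_r(n)$ for genuinely multiplicative majorants $G_r$, and then applies Lemma~\ref{le:MVmultfunct} together with Lemma~\ref{le:prel}\eqref{it:KSupbound}--\eqref{it:secmom} to get $\sum_{X\le n\le 2X}G_r(n)^2\ll X\prod_{p\le X,\lambda_f(p)=0}(1-1/p)\,(\log X/\log y_{2r+1})^{69}$, the choice $\gamma=2^{-1/100}$ making the sum over $r$ converge against the factor $2^{-2r}$. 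If you tried to finish your route you would need exactly this kind of $r$-decomposition applied simultaneously to $\rho^+(a_1'c_1)$ and $\rho^+(a_2'c_1)$, with the geometric factors beating the resulting $16^{\omega}$-type losses in the $c_1$-sum --- at which point you would have reconstructed the paper's argument inside a more cumbersome expansion. As it stands, the proposal is a plan with a genuine gap, not a proof.
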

From Proposition \ref{pr:upperbound2} and an estimate for the shifted convolution
problem of $\lambda_f(n)$ we deduce the following proposition. 
\begin{prop} 
\label{pr:upperbound}
There exist positive constants $C$ and $\varepsilon$ such that, uniformly in $K > 0$ and $h \leq X^\varepsilon$, one has, for at least proportion $(1 - 1/K^2)$ of $x \in [X, 2X]$, 
$$
\bigg | \sum_{x \leq n \leq x + hk(x)} \text{sgn}(\lambda_f(n)) w_n \bigg |
\leq C K \sqrt{h}. 
$$
\end{prop}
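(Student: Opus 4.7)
The plan is to obtain the conclusion from a second-moment (variance) bound via Chebyshev's inequality. Write $S(x) = \sum_{x \leq n \leq x+hk(x)} \mathrm{sgn}(\lambda_f(n)) w_n$ for the quantity to be bounded. Since the upper bound allowed is of the form $CK\sqrt{h}$ and the asserted exceptional measure is $X/K^2$, it suffices to establish
\[
\int_X^{2X} |S(x)|^2 \, dx \ll h X,
\]
with an implicit constant independent of $K$ and $h$; Chebyshev's inequality then yields the proposition.

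My first step would be to open the square and integrate in $x$: writing $H := h\,k(2X) \asymp h\,k(X)$ (using that $k$ varies slowly on $[X,2X]$) and noting that the Lebesgue measure of $\{x \in [X, 2X] : n, m \in [x, x+hk(x)]\}$ is at most $(H - |n-m|)_+$, one finds
\[
\int_X^{2X} |S(x)|^2 \, dx \ll \sum_{\substack{n,m \\ |n-m| \leq H}} (H - |n-m|)_+ \, \mathrm{sgn}(\lambda_f(n)\lambda_f(m)) \, w_n w_m.
\]
The diagonal $n=m$ contributes at most $H \sum_{X \leq n \leq 2X} w_n^2$, which by Proposition~\ref{pr:upperbound2} and the identity $k(X)\prod_{p \leq X, \lambda_f(p)=0}(1-1/p) \asymp 1$ is $\ll H \cdot X/k(X) \asymp hX$, as required.

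The off-diagonal $\ell := n - m \neq 0$ is the main task. Here I would exploit the bilinear structure of $w_n$: since $(a,b)=1$ and $\lambda_f(a) \neq 0$ in the definition of $w_n$, multiplicativity gives $\mathrm{sgn}(\lambda_f(n)) = \mathrm{sgn}(\lambda_f(a))\mathrm{sgn}(\lambda_f(b))$, hence
\[
\mathrm{sgn}(\lambda_f(n))\, w_n \;=\; \sum_{\substack{ab = n,\ a \leq y \\ \lambda_f(a) \neq 0,\ (a,b)=1}} \mu^2(a)\,\mathrm{sgn}(\lambda_f(a))\,\rho^+(b)\,\lambda_f(b).
\]
Expanding $\rho^+(b) = \sum_{d | b,\, d \in \mathcal{D}^+} \mu(d)$ and absorbing $d$ into the ``short'' variable, the off-diagonal thus reduces to a weighted sum, over bounded parameters $a_1, a_2 \leq y^2$ and shifts $0 < |\ell| \leq H$, of shifted convolution sums
\[
\mathcal{C}(a_1, a_2, \ell) \;=\; \sum_{\substack{b_1, b_2 \\ a_1 b_1 - a_2 b_2 = \ell \\ b_i \asymp X/a_i}} \lambda_f(b_1)\lambda_f(b_2),
\]
to which I apply Harcos's estimate~\cite{Harcos03} (or the Sarnak--Ubis refinement): each such sum is bounded by $(X^2/(a_1 a_2))^{1-\eta}$ for some fixed $\eta>0$.

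The hardest point will be balancing parameters so that the Harcos saving is not swallowed by the external summations. With $y = X^\delta$ and $H \leq X^{\varepsilon + o(1)}$, the total off-diagonal is at most roughly $H^{2} \cdot y^{2} \cdot X^{1-\eta}$, which is $\ll hX \cdot X^{-\eta + O(\delta + \varepsilon)}$; choosing $\varepsilon$ (and verifying that the fixed $\delta$ is small enough) makes this acceptable. The secondary technical nuisances are (i) carrying the coprimality conditions $(a_i, b_i)=1$ through the expansion via Möbius inversion without destroying the Harcos input, and (ii) controlling the dependence on $a_1, a_2, \ell$ in Harcos's bound, which has at most polynomial dependence on these parameters and is therefore absorbed in the saving for $h \leq X^\varepsilon$ with $\varepsilon$ sufficiently small.
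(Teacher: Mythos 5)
Your overall strategy is the same as the paper's: reduce to the variance bound $\int_X^{2X}|S(x)|^2\,dx \ll hX$ and apply Chebyshev, bound the diagonal by Proposition~\ref{pr:upperbound2} using $k(X)\prod_{p\leq X,\lambda_f(p)=0}(1-1/p)\asymp 1$, and treat the off-diagonal by exploiting the bilinear structure of $w_n$ and the shifted convolution bound of Harcos, with polynomial losses in $y=X^\delta$ and $h\leq X^\varepsilon$ absorbed by the power saving. All of that matches the paper's argument.

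There is, however, one concrete gap in your off-diagonal reduction. After removing the coprimality conditions by M\"obius (giving conditions $d\mid b$, $d'\mid b'$ with $d\mid a$, $d'\mid a'$) and expanding $\rho^+(b)=\sum_{e\mid b,\,e\in\mathcal{D}^+}\mu(e)$, what remains is a shifted convolution in which the variables $b,b'$ are constrained to be multiples of $[d,e]$ and $[d',e']$, while the weights are still $\lambda_f(b)\lambda_f(b')$. You cannot simply ``absorb $d$ into the short variable'': writing $b=[d,e]c$ does not turn $\lambda_f(b)$ into $\lambda_f(c)$, since $\lambda_f$ is not totally multiplicative and $c$ need not be coprime to $[d,e]$. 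So the sums you denote $\mathcal{C}(a_1,a_2,\ell)$, with clean coefficients $\lambda_f(b_1)\lambda_f(b_2)$ and only the linear form modified, are not what actually arises, and Harcos's theorem does not apply to them as you have set things up. The paper's Lemma~\ref{le:shiftconvsum} supplies precisely the missing device: the restriction $r\mid n$ is realized through the cusp form $f_r(z)=r^{-1}\sum_{j<r}f(z+j/r)$ of level $r^2$, so that the constrained sum becomes a shifted convolution of two genuine cusp forms to which Harcos's Theorem~1 applies; one must then check that the implied constant depends only polynomially on the level, which requires verifying that the Wilton bound $\sum_{n\leq x}\lambda_{f_A}(n)e(n\alpha)\ll\sqrt{x}\log x$ and the Rankin--Selberg bound $\sum_{n\leq x}|\lambda_{f_A}(n)|^2\ll x$ hold uniformly in $A$. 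Your remark (ii) about polynomial dependence on $a_1,a_2,\ell$ addresses the coefficients of the linear form but not this level-raising issue, which is the genuinely nontrivial step; without it the off-diagonal estimate is not justified. (Your count $y^2$ of the outer parameters is also an undercount --- the paper gets $y^{4+\varepsilon}$ --- but since everything is polynomial in $y=X^\delta$ this is harmless.)
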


On the other hand from Proposition \ref{pr:moments} we deduce the following
complimentary result. 
\begin{prop}\label{pr:lowerbound}
For any $h \geq 1$, there is a positive proportion of $x \in [X, 2X]$
such that
\begin{equation}
\label{eq:shortlowbound}
\sum_{x \leq n \leq x + h k(X)} w_n' \gg h.
\end{equation}
\end{prop}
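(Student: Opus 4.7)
The plan is a second-moment/Paley--Zygmund argument applied to $S(x) := \sum_{x \leq n \leq x+L} w_n'$, where $L := hk(X)$. For the first moment, exchanging summation and integration,
\[
\int_X^{2X} S(x)\, dx = L \sum_{X \leq n \leq 2X} w_n' + O\Big(L \sum_{n \text{ near boundary}} w_n'\Big),
\]
and by Proposition~\ref{pr:moments} this is $\gg LX/k(X) = hX$, so that $\bar S := X^{-1}\int_X^{2X} S(x)\, dx \gg h$.

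The key step is the matching upper bound $\int_X^{2X} S(x)^2\, dx \ll h^2 X$. Expanding the square gives $\int_X^{2X} S(x)^2\, dx \leq L \sum_{|n-m| \leq L} w_n' w_m'$; the diagonal contributes at most $L \sum_n (w_n')^2 \ll hX \leq h^2 X$ by Proposition~\ref{pr:moments}, while the off-diagonal requires a shifted-convolution estimate of the form
\[
\sum_{X \leq n \leq 2X} w_n' w_{n+d}' \ll \frac{X}{k(X)^2} \prod_{\substack{p \mid d \\ \lambda_f(p) = 0}}\Big(1 + \frac{C}{p}\Big), \qquad 1 \leq |d| \leq L,
\]
for some absolute constant $C$. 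Since the average of the product factor over $|d| \leq L$ is $O(1)$, summing yields $\sum_{1 \leq |d| \leq L}\sum_n w_n' w_{n+d}' \ll LX/k(X)^2 = hX/k(X)$, and multiplying by $L$ bounds the off-diagonal contribution to $\int S^2$ by $\ll h^2 X$.

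Combining the two moment bounds, Paley--Zygmund applied to the non-negative function $S(x)$ gives
\[
\bigl|\{x \in [X, 2X] : S(x) \geq \tfrac12 \bar S\}\bigr| \geq \tfrac14 \cdot \frac{\bigl(\int_X^{2X} S(x)\, dx\bigr)^2}{\int_X^{2X} S(x)^2\, dx} \gg \frac{(hX)^2}{h^2 X} = X,
\]
which, combined with $\bar S \gg h$, is exactly the claimed positive proportion of $x \in [X, 2X]$ with $S(x) \gg h$.

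The hard part will be the shifted-convolution estimate above. Unfolding $w_n' w_{n+d}'$ as a double sum over the squarefree $y$-smooth parts $a, c$ (with $\lambda_f(a)\lambda_f(c) \neq 0$) of $n$ and $n+d$, writing $n = ab$ and $n + d = cb'$ with $y$-rough $b, b'$, and bounding $|\lambda_f(b)\lambda_f(b')| \leq \tfrac12(|\lambda_f(b)|^2 + |\lambda_f(b')|^2)$, the problem reduces to a sieve counting problem with $|\lambda_f|^2$-weights on the rough parts. Lemma~\ref{le:prel}(iii) handles the rough-part averages, while a Selberg upper-bound sieve for the conditions $p \nmid n$ and $p \nmid n+d$ over primes $p \leq y$ with $\lambda_f(p) = 0$ produces the density $1/k(X)^2$ together with the claimed local correction at primes dividing $d$.
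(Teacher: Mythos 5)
Your overall skeleton (first moment of $S(x)=\sum_{x\le n\le x+hk(X)}w_n'$, second moment $\int S^2$, Paley--Zygmund) is coherent, and the first-moment and diagonal computations are fine. But the step you yourself flag as ``the hard part'' is a genuine gap, and it is precisely the difficulty the whole proposition turns on. Your second-moment bound needs the shifted estimate $\sum_{X\le n\le 2X} w_n'w_{n+d}' \ll \frac{X}{k(X)^2}\prod_{p\mid d,\,\lambda_f(p)=0}(1+C/p)$, i.e.\ a correlation bound for a weight that carries the unbounded factor $|\lambda_f(b)|$ on the $y$-rough part. The sketch you give does not deliver it: Lemma~\ref{le:prel}\eqref{it:secmom} only controls $\sum_p |\lambda_f(p)|^2/p$ on average and contains no information about $|\lambda_f|^2$ in arithmetic progressions, which is what you would need to run a Selberg sieve on the conditions at $n+d$ while carrying the weight $|\lambda_f(b(n))|^2$ at $n$ (after sieving, the weighted count must be evaluated in progressions to all moduli up to the sieve level, a level-of-distribution input nowhere available here). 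Nor can you simply majorize $w_n'$ by a multiplicative function and quote a Nair--Tenenbaum/Henriot-type correlation bound: in the Maass case, which is the case of real interest, Ramanujan is open, $|\lambda_f(p)|$ is not known to be bounded, and the standard hypotheses of those correlation results (boundedness of the function at primes, up to $A^{\Omega}$-type growth) fail for this majorant. So as written the key estimate is asserted, not proved, and proving it would be a substantial piece of work in its own right.

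The paper avoids this obstacle entirely by a thresholding device rather than a direct second moment of $S(x)$: it sets $\mathcal{H}_1=\{n\in[X,2X]: w_n'\ge\varepsilon\}$, uses the two moments of Proposition~\ref{pr:moments} and Cauchy--Schwarz to get $|\mathcal{H}_1|\gg X/k(X)$, and then shows that for a positive proportion of $x$ the window $[x,x+hk(X)]$ contains $\gg h$ elements of $\mathcal{H}_1$ (each contributing at least $\varepsilon$ to the sum). Controlling the $x$ whose windows contain abnormally many points of $\mathcal{H}_1$ requires only the correlation sum $\sum_n \mathbf{1}_{\lambda_f(n)\neq 0}\mathbf{1}_{\lambda_f(n+\Delta)\neq 0}$ of a \emph{bounded} multiplicative indicator, for which Henriot's theorem applies cleanly; the weights $|\lambda_f(b)|$ never enter any shifted convolution. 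If you want to salvage your route, you should either prove your correlation estimate (addressing the unboundedness in the Maass case, e.g.\ by truncation plus Rankin--Selberg, and the equidistribution input for the sieve) or replace the direct $\int S^2$ bound by the paper's threshold-and-count argument.
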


\begin{proof}

Let $\varepsilon$ be a small positive constant and let $\mathcal{H}_1$ be the set of square-free integers $n \in [X, 2X]$ for
which $w'_n \geq \varepsilon$. Then we have by the first part of Proposition~\ref{pr:moments},
\[
\sum_{n \in \mathcal{H}_1} w'_n \geq \sum_{X \leq n \leq 2X} w'_n - \varepsilon \sum_{\substack{X \leq n \leq 2X \\ w'_n \neq 0}} 1 \gg X \prod_{\substack{p \leq X \\ \lambda_f(p) = 0}} \left(1-\frac{1}{p}\right).
\]
Hence by Cauchy-Schwarz and the second part of Proposition~\ref{pr:moments},
\begin{equation}
\label{eq:H1lowbound}
|\mathcal{H}_1| \geq \frac{(\sum_{n \in \mathcal{H}_1} w'_n)^2}{\sum_{n} {w'_n}^2} \geq \frac{\delta X}{k(x)}
\end{equation}
for some $\delta > 0$.
Consider now the following sets 
\begin{align*}
\mathcal{U}_0 & := \{ X \leq x \leq 2X: |[x, x + h k(X)] \cap \mathcal{H}_1|
\leq \delta h / 2 \} \\
\mathcal{U}_1 & := \{ X \leq x \leq 2X: \frac{h}{\delta^2} \geq |[x, x + h k(X)] \cap \mathcal{H}_1| > \delta h / 2 \} \\
\mathcal{U}_2 & := \{ X \leq x \leq 2X: |[x, x + h k(X)] \cap \mathcal{H}_1|
\geq \frac{h}{\delta^2} \}.
\end{align*}
Then by \eqref{eq:H1lowbound}
\begin{align*}
\delta h X & \leq \sum_{X \leq x \leq 2X} |[x, x + hk(x)) \cap \mathcal{H}_1| + O(1)\\ &  \leq \left(\sum_{x \in \mathcal{U}_0} + \sum_{x \in \mathcal{U}_1} + \sum_{x \in \mathcal{U}_2} \right) |[x, x + hk(x)] \cap \mathcal{H}_1| + O(1) \\
& \leq X \cdot \delta h/2 + |\mathcal{U}_1| \cdot \frac{h}{\delta^2} + \sum_{x \in \mathcal{U}_2} |[x, x + h k(X)] \cap \mathcal{H}_1| + O(1)
\end{align*}
Notice that $n \in \mathcal{H}_1$ implies that $w_n' \geq \varepsilon$ and hence
$\lambda_f(n) \neq 0$. In particular, 
\begin{align*}
\sum_{X \leq x \leq 2X} |[x, x + h k(X)] \cap \mathcal{H}_1|^2 
&  \ll h k(X) \sum_{\substack{X \leq n \leq 2X \\ \lambda_f(n) \neq 0}} 1 + h k(X) \sum_{0 \neq |\Delta| \leq h k(X)} \sum_{\substack{X \leq n \leq 2X \\
\lambda_f(n) \neq 0 \\ \lambda_f(n + \Delta) \neq 0}} 1 \\
& \ll h^2 X
\end{align*}
by a result of Henriot \cite{Henriot, Henriot2}, since $\mathbf{1}_{\lambda_f(n) \neq 0}$ is a multiplicative function of $n$. In addition it follows from this that $|\mathcal{U}_2| \leq C \delta^4 X$ for some large absolute constant $C > 0$. Applying Cauchy-Schwarz and the previous two bounds, we get
\begin{align*}
\sum_{x \in \mathcal{U}_2} |[x, x + h k(X)] \cap \mathcal{H}_1| 
& \leq |\mathcal{U}_2|^{1/2} \cdot \Bigg ( \sum_{X \leq x \leq 2X} |[x, x + h k(X)] \cap \mathcal{H}_1|^2 \Bigg )^{1/2} \\ 
& \leq C_0 \delta^2 h X
\end{align*}
for some large absolute constant $C_0$. Combining the above inequalities it follows that
$$
\delta h X \leq X \delta h / 2 + |\mathcal{U}_1| \cdot \frac{h}{\delta^2}
+ C_0 \delta^2 X + O(1)
$$
Taking $\delta$ small enough but fixed, we conclude that
$|\mathcal{U}_1| \gg X$.
The claim now follows since the desired lower bound
\eqref{eq:shortlowbound} holds for every $x \in \mathcal{U}_1$ by the definitions of $\mathcal{U}_1$ and $\mathcal{H}_1$
\end{proof}

We are now ready to prove Theorem~\ref{th:signchanges}
which follows from combining Proposition \ref{pr:upperbound} and Proposition
\ref{pr:lowerbound}.  

\begin{proof}[Proof of Theorem~\ref{th:signchanges}]
According to Proposition \ref{pr:lowerbound}
we have
$$
 \sum_{x \leq n \leq x + hk(x)} w_n'  \geq c h
$$
for a positive proportion $\delta$ of $x \in [X, 2X]$, with
$c > 0$ an absolute constant. 
However, according to Proposition \ref{pr:upperbound} we have
$$
\bigg | \sum_{x \leq n \leq x + h k(x)} \text{sgn}(\lambda_f(n)) w_n \bigg |
\leq \frac{C}{\delta} \sqrt{h}
$$
for a proportion of at least $1 - \delta^2 > 1 - \delta$ of $x \in [X, 2X]$. Therefore once $h$ is large enough but fixed
(larger than $(C / (\delta c))^2$), we will have a positive
proportion of $x \in [X, 2X]$ for which
$$
\bigg | \sum_{x \leq n \leq x + hk(x)} \text{sgn}(\lambda_f(n)) w_n \bigg |
< \sum_{x \leq n \leq x + hk(x)} w_n'.
$$
For every such $x$ a sign change of $\lambda_f(n)$ must occur in the interval $[x, x+hk(x)]$ since
$w_n \geq w_n' \geq 0$ for every $n$. Hence there are $\gg X/k(X)$ sign changes, and the claim follows from Lemma~\ref{le:prel}\eqref{it:nonzeroprop}.
\end{proof}

\section{Proof of Proposition~\ref{pr:moments}}
\label{se:proofmoments}
We will use the following general bound for mean-values of multiplicative
functions.
\begin{lem}
\label{le:MVmultfunct}
Let $g \colon \mathbb{N} \to [0, \infty)$ be a multiplicative function. Let $A, B$ be constants such that 
for all $y \geq 1$,
\[
\sum_{p \leq y} g(p) \log p \leq Ay
\quad \text{and} \quad
\sum_{p\in \mathbb{P}} \sum_{\nu \geq 2} \frac{g(p^\nu)}{p^\nu} \log p^\nu \leq B.
\]
Then, for $x \geq 1$,
\[
\frac{1}{x} \sum_{n \leq x} g(n) \ll
(A+B+1) \prod_{p \leq x} \left(1-\frac{1}{p} + \sum_{\nu \geq 1} \frac{g(p^\nu)}{p^\nu}\right),
\]
where the implied constant is absolute.
\end{lem}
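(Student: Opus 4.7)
The plan is to follow the classical Halberstam--Richert method for mean values of non-negative multiplicative functions (cf.\ Tenenbaum, \emph{Introduction to Analytic and Probabilistic Number Theory}, Book III, Chapter 3). Write $S(x) := \sum_{n \leq x} g(n)$. The starting point is the identity
$$S(x) \log x = \sum_{n \leq x} g(n) \log(x/n) + \sum_{n \leq x} g(n) \log n.$$
The first sum rewrites as $\int_1^x S(t)\,dt/t$ after using $\log(x/n) = \int_n^x dt/t$ and swapping the order of summation. For the second, I expand $\log n = \sum_{d \mid n} \Lambda(d)$, switch summation to get $\sum_{p^\nu \leq x} \log p \sum_{m \leq x/p^\nu} g(m p^\nu)$, apply multiplicativity by writing $m = p^j m'$ with $(m',p)=1$, and then relax the coprimality condition by a trivial upper bound, yielding
$$\sum_{n \leq x} g(n) \log n \leq \sum_{p^\nu \leq x} g(p^\nu) \log p \cdot S(x/p^\nu).$$

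I would then split the $p^\nu$ sum at $\nu = 1$ versus $\nu \geq 2$. For $\nu = 1$, Abel summation together with the hypothesis $\sum_{p \leq y} g(p) \log p \leq A y$ converts the sum into something of order $A \int_1^x S(t)\, dt/t$ up to acceptable error. For $\nu \geq 2$, I use the crude bound $S(x/p^\nu) \leq (x/p^\nu) \cdot M(x)$, where $M(x) := \prod_{p \leq x}(1 - 1/p + \sum_{\nu \geq 1} g(p^\nu)/p^\nu)$ is the target Euler product, combined with the hypothesis $\sum_{p,\,\nu \geq 2} g(p^\nu) \log p^\nu / p^\nu \leq B$ to give a contribution $\ll B \cdot x \cdot M(x)$. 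This produces an integral inequality of Gronwall type
$$S(x) \log x \leq C(A+1) \int_1^x S(t)\frac{dt}{t} + C(B+1) \cdot x \cdot M(x),$$
which can be iterated to yield
$$S(x) \ll (A+B+1)\, \frac{x}{\log x} \exp\!\Big(\sum_{p \leq x} \sum_{\nu \geq 1} \frac{g(p^\nu)}{p^\nu}\Big).$$

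Finally, Mertens' theorem gives $\prod_{p \leq x}(1 - 1/p) \asymp 1/\log x$, and the exponential factor is comparable (up to an absolute constant) to $\prod_{p \leq x}(1 + \sum_{\nu \geq 1} g(p^\nu)/p^\nu)$. Combining these two observations collapses the right-hand side into the stated Euler product
$$\prod_{p \leq x}\Big(1 - \frac{1}{p} + \sum_{\nu \geq 1} \frac{g(p^\nu)}{p^\nu}\Big).$$
The main obstacle is bookkeeping: one must ensure that the dependence on $A$ and $B$ remains \emph{linear} in $A + B + 1$ rather than, say, exponential, and that the loss incurred when removing the $(m',p)=1$ condition is absorbed into the $B$-hypothesis on higher prime powers. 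This is precisely the content of Theorem~01 in the cited chapter of Tenenbaum, which one could either reproduce in full or simply invoke.
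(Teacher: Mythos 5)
The paper's own proof of this lemma is a one-line citation to Theorem III.3.5 of Tenenbaum (the Halberstam--Richert mean-value bound), so your closing option of simply invoking that theorem is exactly what the paper does, and your sketch is the standard proof of that result. One concrete caveat if you reproduce the argument rather than cite it: the Gronwall-type iteration you propose does not keep the dependence linear in $A$ --- iterating $S(x)\log x\le C(A+1)\int_1^x S(t)\,dt/t+C(B+1)xM(x)$ produces a factor of size roughly $(\log x)^{C(A+1)}$, i.e.\ exponential in $A$, which is precisely the loss you say must be avoided. The standard way to preserve linearity is to bypass the integral inequality: bound $\sum_{n\le x}g(n)\log(x/n)\le x\sum_{n\le x}g(n)/n$, handle the $\nu=1$ term by swapping the order of summation, $\sum_{p\le x}g(p)\log p\,S(x/p)=\sum_{m\le x}g(m)\sum_{p\le x/m}g(p)\log p\le Ax\sum_{m\le x}g(m)/m$, and handle $\nu\ge2$ via the trivial bound $S(t)\le t\sum_{m\le t}g(m)/m$ together with the $B$-hypothesis, so that altogether $S(x)\log x\le(A+B+1)\,x\sum_{n\le x}g(n)/n$ with no iteration. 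Finally $\sum_{n\le x}g(n)/n\le\prod_{p\le x}\bigl(1+\sum_{\nu\ge1}g(p^\nu)p^{-\nu}\bigr)$, Mertens gives $1/\log x\ll\prod_{p\le x}(1-1/p)$, and the pointwise inequality $(1-1/p)(1+t)\le 1-1/p+t$ for $t\ge0$ gives exactly the stated Euler product, so only this trivial one-sided comparison (not a two-sided $\asymp$) is needed.
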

\begin{proof}
See \cite[Theorem III.3.5]{Tenenbaum95}.
\end{proof} 

We will start by proving the upper bound stated in Proposition~\ref{pr:moments}. 
We note that $w_n'^2 \leq g(n)$ where $g(n)$ is a multiplicative function such that
$$
g(p^\nu) = \begin{cases}
1 & \text{ if } p \leq y, \nu = 1 \text{ and } \lambda_f(p) \neq 0 \\
0 & \text{ if } p \leq y \text{ and } \nu > 1 \text { or } \lambda_f(p) = 0 \\
|\lambda_f(p^\nu)|^2 & \text{ if } p > y
\end{cases} 
$$
By Lemma~\ref{le:MVmultfunct} and Lemma~\ref{le:prel}\eqref{it:KSupbound}-\eqref{it:secmom},
\[
\begin{split}
\sum_{X \leq n \leq 2X} {w_n'}^2 &\ll X \prod_{\substack{p \leq y \\ \lambda_f(p) = 0}} \Big ( 1 - \frac{1}{p} \Big )
\prod_{\substack{y < p \leq X}} \Big (1 -\frac{1}{p} + \sum_{\nu \geq 1} \frac{|\lambda_f(p^\nu)|^2}{p^\nu} \Big ) \ll  X \prod_{\substack{p \leq X \\ \lambda_f(p) = 0}} \Big ( 1 - \frac{1}{p} \Big ).
\end{split}
\]
We now focus on the lower bound for the first moment of $w_n'$.  
The term $w'_n$ contains at most one summand. Therefore, 
\begin{align*}
\sum_{X \leq n \leq 2X} w'_n & = \sum_{\substack{X \leq a b \leq 2X \\ a \leq y, \lambda_f(a) \neq 0 \\ P^{-}(b) > y}}
\mu^2(a) |\lambda_f(b)| \geq \sum_{2X/y \leq p \leq 2X} |\lambda_f(p)| \sum_{\substack{X / p \leq a \leq 2X / p \\ \lambda_f(a) \neq 0}} |\mu(a)|^2 \\
& \gg X \prod_{\substack{p \leq X \\ \lambda_f(p) = 0 }} \Big ( 1 - \frac{1}{p} \Big ) 
\sum_{\substack{2X / y \leq p \leq 2X}} \frac{|\lambda_f(p)|}{p} 
\gg X \prod_{\substack{p \leq X \\ \lambda_f(p) = 0 }} \Big ( 1 - \frac{1}{p} \Big ) 
\sum_{\substack{2X / y \leq p \leq 2X}} \frac{1}{p}
\end{align*}
by Lemma~\ref{le:prel}\eqref{it:firmomlow}. Hence
\[
\sum_{X \leq n \leq 2X} w'_n \gg X \prod_{\substack{p \leq X \\ \lambda_f(p) = 0 }} \Big ( 1 - \frac{1}{p} \Big ) \log \frac{\log 2X}{\log (2X/y)} \gg X \prod_{\substack{p \leq X \\ \lambda_f(p) = 0 }} \Big ( 1 - \frac{1}{p} \Big ).
\]


\section{Proof of Proposition~\ref{pr:upperbound2}}
\label{se:wnmom}
Recall that $\rho^+(n)$ are upper bound Brun sieve weights, so that (compare with \cite[Section 6.2]{Opera})
\[
\begin{split}
\rho^{+}(n) - \mathbf{1}_{P^-(n)>y} &= \sum_{r \geq 0} \sum_{\substack{n = p_1 \cdots p_{2r + 1} d \\ p_1 > \cdots > p_{2r + 1} > y_{2r+1} \\ p_{2\ell + 1} \leq y_{2\ell+1} \ (\forall \ell < r) \\ P^-(d) > p_{2r+1}}} \mu(d) \leq \sum_{r \geq 0} \mathbf{1}_{P^-(n)>y_{2r+1}} \mathbf{1}_{\omega(n) \geq 2r+1} \sum_{k \mid n} \mu(k)^2 \\
&\leq \sum_{r \geq 0} \mathbf{1}_{P^-(n)>y_{2r+1}}  2^{-(2r+1)} 2^{2\omega(n)}.
\end{split}
\]

Hence
$$
w_n = \sum_{\substack{a b = n \\ a \leq y, \lambda_f(a) \neq 0  \\ (a,b) = 1}} \mu^2(a) \rho^{+}(b) |\lambda_f(b)| \leq w_n' + w_n''
$$
where $w_n'$ is as before and 
\[
\begin{split}
w_n'' &:= \sum_{r \geq 0} 2^{-2r} \sum_{\substack{a b = n \\ a \leq y, \lambda_f(a) \neq 0  \\ P^-(b) > y_{2r+1}, (a,b) = 1}} \mu^2(a) |\lambda_f(b)| 2^{2\omega(b)} .
\end{split}
\]
Note that $w_n^2 \leq 2 w_n'^2 + 2w_n''^2$. 
By Proposition~\ref{pr:moments} it is enough to consider $\sum w_n''^2$. 
In the sum over divisors of $n$ in the definition of $w_n''$ we write $a = c d$ with $P^+(c) \leq y_{2r + 1}$ and $P^-(d) > y_{2r + 1}$. This allows us to re-write the sum over $a b = n$ as follows
\begin{align} \label{eq:simplebound}
\sum_{\substack{ c d b = n \\ cd \leq y , \lambda_f(c d) \neq 0 \\
P^-(bd) > y_{2r + 1} \\ P^+(c) \leq y_{2r + 1}}} \mu^2(c d) |\lambda_f(b)|
4^{\omega(b)} & \leq \sum_{\substack{c \ell = n \\ P^+(c) \leq y_{2 r + 1} \\ \lambda_f(c) \neq 0}} \mu^2(c) g_r(\ell) := G_r(n)
\end{align}
where $g_r(\ell)$ is a multiplicative function such that 
\[
\begin{split}
g_r(p^{\nu}) &:= \sum_{\substack{ p^{\nu} = d b \\ P^-(b d) > y_{2r + 1}}} \mu^2(d) |\lambda_f(b)| 4^{\omega(b)} \\
&= \begin{cases}
1 + 4 |\lambda_f(p)| & \text{ if } p \geq y_{2r + 1} \text{ and } \nu = 1 \\
4 |\lambda_f(p^{\nu})|+ 4 |\lambda_f(p^{\nu-1})| & \text{ if } p \geq y_{2r + 1} \text{ and } \nu \geq 2\\
0 & \text{ otherwise} 
\end{cases} .
\end{split}
\]
%
By Cauchy-Schwarz and (\ref{eq:simplebound}),
$$
\sum_{X \leq n \leq 2X} w_n''^2 \ll \sum_{r \geq 0} 2^{-2r}
\sum_{X \leq n \leq 2X} G_r(n)^2
$$
Note that $G_r(n)^2$ is also multiplicative, and 
\[
G_r(p)^2 = 
\begin{cases} 
(1+4|\lambda_f(p)|)^2 \leq 4 + 64|\lambda_f(p)|^2 & \text{if $p > y_{2r+1}$;} \\
0 &\text{if $p \leq y_{2r+1}$ and $\lambda_f(p) = 0$;}\\
1 &\text{if $p \leq y_{2r+1}$ and $\lambda_f(p) = 1$,}
\end{cases}
\]
and $G_r(p^\nu)^2 \leq 64 p^{7\nu/32}$ by Lemma~\ref{le:prel}\eqref{it:KSupbound}.
By Lemma \ref{le:MVmultfunct} and Lemma~\ref{le:prel}\eqref{it:secmom} we get
\begin{align*}
\sum_{X \leq n \leq 2X} G_r(n)^2 & \ll X \prod_{\substack{p \leq y_{2r + 1} \\
\lambda_f(p) = 0 }} \Big (1 - \frac{1}{p} \Big ) \cdot \prod_{y_{2r + 1}
\leq p \leq X} \Big ( 1 + \frac{4 + 64 |\lambda_f(p)|^2}{p} \Big ) \\
& \ll X \prod_{\substack{p \leq X \\ \lambda_f(p) = 0}} \Big ( 1 - \frac{1}{p}
\Big ) \cdot \Big ( \frac{\log X}{\log y_{2r + 1}} \Big )^{69}
\end{align*}
Combining the above equations and recalling that
$y_r = X^{\delta/2 (1 - \gamma^2) \gamma^{m-1}}$ we obtain
\begin{align*}
\sum_{X \leq n \leq 2X} w_n''^2 \ll X \prod_{\substack{p \leq X \\ 
\lambda_f(p) = 0}} \Big ( 1- \frac{1}{p} \Big ) 
\sum_{r \geq 0} 2^{-2r} \Big ( \frac{1}{(1 - \gamma^2) \delta \gamma^{2r}}
\Big )^{69} 
\end{align*}
Picking $\gamma = 2^{-1/100}$ we see that the sum over $r$ is $O(1)$. Combining the previous bounds then leads to
$$
\sum_{X \leq n \leq 2X} w_n^2 \ll X \prod_{\substack{p \leq X \\ \lambda_f(p) = 0}} \Big ( 1 - \frac{1}{p} \Big )
$$
as was claimed. 

\section{Proof of Proposition~\ref{pr:upperbound}}
\label{se:prooflowbound}
An important role in the proof of Proposition~\ref{pr:upperbound} is played by estimates for shifted convolution sums.
\begin{lem}
\label{le:shiftconvsum}
There exists a small $\delta > 0$ such that, 
uniformly in $a, b, A,B,h \leq x^{\delta}$,
\begin{equation}
\label{eq:shiftconvsum}
\sum_{\substack{x \leq aAm, bBn \leq 2x \\ aAm - bBn = h}} \lambda_f(A m) \lambda_f(B n) \ll x^{1 - \delta}.
\end{equation}
\end{lem}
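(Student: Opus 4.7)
The goal is to reduce the twisted shifted convolution sum to the standard shifted convolution problem treated by Harcos in \cite{Harcos03} (or Sarnak--Ubis), and then apply that result uniformly in the shifts and multipliers.

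The starting point is the Hecke multiplicativity relation
\[
\lambda_f(Am) = \sum_{d \mid (A,m)} \mu(d)\, \lambda_f(A/d)\, \lambda_f(m/d),
\]
and analogously for $\lambda_f(Bn)$. Substituting these expansions into the left-hand side of \eqref{eq:shiftconvsum} and writing $m = d_1 m'$, $n = d_2 n'$ with $d_1 \mid A$ and $d_2 \mid B$, the equation $aAm - bBn = h$ becomes
\[
(aAd_1) m' \;-\; (bBd_2) n' \;=\; h.
\]
Hence the original sum is bounded by
\[
\sum_{d_1 \mid A}\sum_{d_2 \mid B} |\lambda_f(A/d_1)\lambda_f(B/d_2)| \left| \sum_{\substack{x/(d_1) \ll aAd_1 m' \leq 2x \\ x/(d_2) \ll bBd_2 n' \leq 2x \\ aAd_1 m' - bBd_2 n' = h}} \lambda_f(m')\,\lambda_f(n') \right|.
\]

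Next I would apply Lemma~\ref{le:prel}\eqref{it:KSupbound} to bound $|\lambda_f(A/d_1)\lambda_f(B/d_2)| \ll x^{\varepsilon}$, and invoke Harcos's shifted convolution estimate on the inner sum. That bound is uniform in the multipliers and the shift up to some fixed power $x^{\theta}$, so provided the initial $\delta$ is chosen smaller than $\theta/5$, the new coefficients $aAd_1, bBd_2, h$ and the dyadic pieces of the ranges remain $\leq x^{\theta}$, and Harcos yields a bound of the form $\ll x^{1-\eta}$ for some $\eta > 0$ depending only on $\theta$. Finally, the outer sum over $d_1 \mid A$ and $d_2 \mid B$ contributes at most $\tau(A)\tau(B) \ll x^{\varepsilon}$.

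Combining, we obtain a bound $\ll x^{1-\eta+2\varepsilon}$, which is $\ll x^{1-\delta}$ upon choosing $\delta$ small enough. The main obstacle is purely bookkeeping: one has to check that Harcos's estimate is stated with sufficient uniformity in all three parameters $aAd_1$, $bBd_2$, and $h$ simultaneously, and that the dyadic ranges for $m'$ and $n'$ remain compatible with that uniformity after the change of variables. This is where the reference to \cite[Section 3]{SarnakUbis} is convenient, since the uniformity there is more explicitly displayed, but Harcos's result as originally stated suffices once one tracks the dependences through the amplification argument.
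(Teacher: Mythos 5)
Your proof is correct, but it takes a genuinely different route from the paper. The paper never splits $\lambda_f(Am)$ arithmetically: instead it introduces the auxiliary functions $f_A(z) = \tfrac1A\sum_{0\le j<A} f(z+j/A)$, which are cusp forms of level $A^2$ whose coefficients are $\lambda_f(n)\mathbf 1_{A\mid n}$, rewrites the sum as a shifted convolution of $f_A$ and $f_B$ with multipliers $a,b$ and shift $h$, and then applies Harcos's Theorem~1 to these higher-level forms; the work there goes into checking that the implied constant depends only polynomially on the level and that the Wilton and Rankin--Selberg inputs for $f_A$ are uniform in $A$. You instead keep the fixed level-one form and use the Hecke relation $\lambda_f(Am)=\sum_{d\mid(A,m)}\mu(d)\lambda_f(A/d)\lambda_f(m/d)$ to transfer $A$ and $B$ into the multipliers of the linear form, so the inner sum is a shifted convolution for $f$ itself with multipliers $aAd_1,\,bBd_2\le x^{3\delta}$ and shift $h$. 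That is legitimate here because $f$ is a Hecke eigenform for the full modular group, and it buys you something: you avoid the level-aspect uniformity question entirely and only need Harcos's polynomial uniformity in the multipliers and the shift, which the paper's application requires anyway. The costs are minor bookkeeping: the divisor sum $\tau(A)\tau(B)\ll x^{\varepsilon}$, and the factor $|\lambda_f(A/d_1)\lambda_f(B/d_2)|$, which for Maass forms is not $\ll x^{\varepsilon}$ but only $\ll x^{7\delta/32}$ by Kim--Sarnak (Lemma~\ref{le:prel}\eqref{it:KSupbound}); since this is a fixed power of $x^{\delta}$ it is absorbed by taking $\delta$ small relative to Harcos's saving, exactly as in your final step. (Also note the range condition after substitution is simply $x\le aAd_1m'\le 2x$, not the $x/d_1$ you wrote, but this does not affect the argument.) The paper's construction is somewhat more robust, since it does not rely on multiplicativity of the coefficients, but for the setting of this lemma your reduction is sound.
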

\begin{proof}
Notice first that, for any integer $r$,
\[
\begin{split}
f_r(z) &:= \sum_{\substack{n \\ r | n}} \lambda_f(n) n^{(\kappa-1)/2} e(n z) = 
\sum_{n} \lambda_f(n) n^{(\kappa-1)/2} \cdot \bigg ( \frac{1}{r} \sum_{0 \leq j < r}
e(j n / r) \bigg ) e(n z) \\
&=
\frac{1}{r}
\sum_{0 \leq j < r} f(z + j/r)
\end{split}
\]
is a cusp form of weight $\kappa$ and level $r^2$ (see for instance~\cite[Proof of Proposition 14.19]{IwKo04} for a related argument, noting that in the matrix identity below~\cite[(14.52)]{IwKo04} the upper right corner of the right-most matrix should be $d^2u/r$). 

We can re-write the left-hand side of \eqref{eq:shiftconvsum} as
$$
\sum_{\substack{x \leq am, bn \leq 2x \\ am - bn = h}} \lambda_{f_A}(m) \lambda_{f_B}(n)
$$
where $\lambda_{f_A}(n) = \lambda_f(n)$ if $A | n$ and $\lambda_{f_A}(n) = 0$ otherwise. 
By a result of Harcos~\cite[Theorem 1]{Harcos03} this is indeed bounded by $x^{1-\delta}$. The implicit constant in Harcos's result depends polynomially on the level and weight of the form (see \cite[Addendum in end of Section 1]{Harcos03}) and furthermore it depends on the implicit constants in Wilton's estimate
$$
\sum_{n \leq x} \lambda_{f_A}(n) e(n \alpha) \ll \sqrt{x} \log x
$$
and in the Rankin Selberg bound
$$
\sum_{n \leq x} |\lambda_{f_A}(n)|^2 \ll x.
$$
Since $|\lambda_{f_A}(n)| \leq |\lambda_f(n)|$ the second sum is trivially less than $\sum_{n \leq x} |\lambda_f(n)|^2 \ll x$ with no dependence on $A$. On the other hand, the first sum is by Wilton's bound,
$$
\sum_{n \leq x} \lambda_{f_A}(n) e(n \alpha) = \frac{1}{A} \sum_{j < A} \Big ( 
\sum_{n \leq x} \lambda_f (n) e(n (\alpha + j / r)) \Big ) \ll \sqrt{x} \log x
$$
also with no dependence on $A$, since Wilton's bound $\sum_{n \leq x} \lambda_f(n) e(n\alpha) \ll \sqrt{x} \log x$ is uniform in $\alpha \in \mathbb{R}$. 
\end{proof}
This lemma can be used to prove the following mean square estimate in short intervals.
\begin{lem}
There are absolute positive constants $c$ and $\eta$ such that uniformly in $h \leq X^\eta$, 
$$
\int_X^{2X} \bigg | \sum_{x \leq n \leq x + h k(X)} \text{sgn}(\lambda_f(n)) w_n \bigg |^{2} dx \leq c^2 h X
$$
\end{lem}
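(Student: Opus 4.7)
The plan is to expand the square and split into diagonal and off-diagonal contributions; the diagonal is controlled by Proposition~\ref{pr:upperbound2} and the off-diagonal by the shifted convolution estimate, Lemma~\ref{le:shiftconvsum}, once the sign has been linearised. Opening the square and swapping sum and integral,
\[
\int_X^{2X}\bigg|\sum_{x\leq n\leq x+hk(X)}\text{sgn}(\lambda_f(n))w_n\bigg|^2 dx=\sum_{m,m'}\text{sgn}(\lambda_f(m))\text{sgn}(\lambda_f(m'))w_mw_{m'}L(m,m'),
\]
where $L(m,m')=\max(0,hk(X)-|m-m'|)$ up to $O(hk(X))$ boundary truncation at the endpoints of $[X,2X]$. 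The diagonal contribution $m=m'$ is at most $hk(X)\sum_{X\le m\le 2X}w_m^2$, which by Proposition~\ref{pr:upperbound2} together with the identity $k(X)\prod_{p\leq X,\,\lambda_f(p)=0}(1-1/p)\asymp\prod_{p\leq X,\,\lambda_f(p)=0}(1-1/p^2)\asymp 1$ is $\ll hX$.

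For the off-diagonal the key step is the sign identity
\[
\text{sgn}(\lambda_f(n))w_n=\sum_{\substack{ab=n,\,a\leq y\\(a,b)=1,\,\lambda_f(a)\neq 0}}\mu^2(a)\rho^+(b)\,\text{sgn}(\lambda_f(a))\,\lambda_f(b),
\]
which one verifies by distinguishing cases: when $\lambda_f(n)\neq 0$ both $\lambda_f(a)$ and $\lambda_f(b)$ are nonzero and $\text{sgn}(\lambda_f(n))|\lambda_f(b)|=\text{sgn}(\lambda_f(a))\lambda_f(b)$, while when $\lambda_f(n)=0$ the condition $\lambda_f(a)\neq 0$ and $(a,b)=1$ force $\lambda_f(b)=0$ in each summand, so both sides vanish. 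For each nonzero shift $\Delta=m'-m$ with $|\Delta|\leq hk(X)$, I insert this identity in $w_mw_{m'}$, expand $\rho^+(b_i)=\sum_{d_i\in\mathcal{D}^+,\,d_i\mid b_i}\mu(d_i)$, write $b_i=d_ic_i$, and clear the two coprimality conditions $(a_i,b_i)=1$ by Möbius inversion (adding at most one further variable of size $\le y$). The inner sum reduces to a shifted convolution of the form
\[
\sum_{\substack{X\leq a_id_ic_i\leq 2X\\a_2d_2c_2-a_1d_1c_1=\Delta}}\lambda_f(d_1c_1)\lambda_f(d_2c_2),
\]
to which Lemma~\ref{le:shiftconvsum} applies with parameters $(a,A,b,B,h)=(a_2,d_2,a_1,d_1,\Delta)$.

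Lemma~\ref{le:shiftconvsum} gives the bound $\ll X^{1-\delta'}$, uniformly in all five parameters provided each is $\le X^{\delta'}$, where $\delta'$ denotes the exponent it furnishes. Since $a_i,d_i\le y=X^\delta$ and $|\Delta|\le hk(X)\le X^\eta(\log X)^{O(1)}$, this uniformity is guaranteed once $\delta$ and $\eta$ are chosen small compared with $\delta'$. Summing trivially over the $O(y^4)=O(X^{4\delta})$ tuples $(a_1,a_2,d_1,d_2)$ (plus the $O(y)$ cost coming from the coprimality Möbius inversion), and then over $|\Delta|\le hk(X)$ with weight $L\le hk(X)$, the off-diagonal total is
\[
\ll h^2k(X)^2 X^{5\delta+1-\delta'}\ll hX
\]
as soon as $5\delta+\eta<\delta'$ and the logarithmic $k(X)^2$ is absorbed into $X^{o(1)}$. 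Combining with the diagonal bound yields the claim. The principal technical obstacle is the simultaneous uniformity of Lemma~\ref{le:shiftconvsum} in all five parameters: without a genuinely polynomial dependence on $A$, $B$ and $h$, the counting over $(d_1,d_2,\Delta)$ would swamp the savings $X^{-\delta'}$ and force $\eta$ to collapse to zero.
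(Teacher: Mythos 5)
Your proposal is correct and follows essentially the same route as the paper: expand the square, bound the diagonal by $hk(X)\sum w_n^2$ via Proposition~\ref{pr:upperbound2}, and handle the off-diagonal by factoring the sign through $\mu^2(a)\,\mathrm{sgn}(\lambda_f(a))\rho^+(b)\lambda_f(b)$, opening $\rho^+$ and the coprimality conditions, and applying Lemma~\ref{le:shiftconvsum} uniformly in the small moduli and the shift. The only difference is presentational: you state explicitly the sign identity and the parameter bookkeeping that the paper leaves implicit (the paper feeds $[d,e]$, $[d',e']$ into the shifted-convolution lemma, which is the precise form of your ``one further variable of size $\le y$'').
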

\begin{proof}
We start by expanding the square obtaining sums over $n_1$ and $n_2$, writing then $n_2-n_1 = r$, and interchanging the integration and summations. This shows that the left hand side equals
\[
\begin{split}
&\sum_{X \leq n_1 \leq 2X} \sum_{|r| \leq h k(X)} (hk(X)-|r|) \text{sgn}(\lambda_f(n_1)) w_{n_1} \text{sgn}(\lambda_f(n_1+r)) w_{n_1+r}\\
&\qquad + O\left(hk(X)\left(\sum_{\substack{|n_1 - X| \leq h k(X) \\ |n_2 - X| \leq h k(X)}} |w_{n_1}w_{n_2}| + \sum_{\substack{
|n_1 - 2X| \leq h k(X) \\ |n_2 - 2X| \leq h k(X)}} |w_{n_1}w_{n_2}|\right)\right).
\end{split}
\]
The error terms contribute at most
\[
(h k(X))^3 \max_{n \leq 3X} |w_n|^2 \ll X^{4\eta} X^{7/32} \ll X^{1/2}
\]
by Lemma~\ref{le:prel}\eqref{it:KSupbound}, so we can concentrate on the main term which, by definition of $w_n$ equals
\begin{align*}
& \sum_{0 < |r| \leq hk(X)} (hk(X)-|r|) \sum_{\substack{X \leq a b \leq 2X \\ a' b' - a b = r \\
(a,b) = (a',b') = 1 \\ a, a' \leq y}} \mu^2(a)\mu^2(a') \text{sgn}(\lambda_f(a))\text{sgn}(\lambda_f(a'))\rho^+(b) \rho^+(b') \lambda_f(b) \lambda_f(b') \\ & + hk(X) \sum_{\substack{X \leq n \leq 2X \\ \lambda_f(n) \neq 0}} w_n^2.
\end{align*}
By Proposition \ref{pr:upperbound2} the contribution from the diagonal term $h k(X) \sum w_n^2$ is 
bounded by $\ll h X$.  We re-write the off-diagonal terms as
\[
\begin{split}
&\sum_{0 < |r| \leq hk(X)} (hk(X)-|r|) \sum_{a, a' \leq y} \mu(a)^2 \mu(a')^2 \text{sgn}(\lambda_f(a)) \text{sgn}(\lambda_f(a')) \\
&\qquad \qquad \cdot
\sum_{\substack{X / a \leq b \leq 2X / a \\ (a,b) = 1 \\ (a', b') = 1\\  a' b' - a b = r}} \rho^+(b) \rho^+(b') \lambda_f(b) \lambda_f(b')
\end{split}
\]
We focus on the innermost sum, which we re-write in the following way (after introducing the Moebius function to remove the co-primality conditions), 
$$
\sum_{\substack{d | a \\ d' | a'}} \mu(d) \mu(d') \sum_{\substack{ X / a \leq b \leq 2X / a \\ d | b, d' | b' \\ a' b' - a b = r}} \rho^+(b) \rho^+(b') \lambda_f(b) \lambda_f(b').
$$
Furthermore expanding $\rho^+(b)$ according to its definition, we get
$$
\sum_{\substack{d | a \\ d' | a'}} \mu(d) \mu(d') \sum_{\substack{e, e' \in \mathcal{D}^+}}
\mu(e) \mu(e') \sum_{\substack{X / a \leq b \leq 2X / a \\ [d,e] | b \\ [d', e'] | b' \\
a' b' - a b = r}} \lambda_f(b) \lambda_f(b').
$$
Notice that $\mathcal{D}^+ \subset [1, y]$, and that $d,d' \leq y$ since $a, a' \leq y$. 
Therefore the inner sum is by Lemma \ref{le:shiftconvsum} bounded by $\ll X^{1 - \eta}$ for
some absolute $\eta > 0$, provided that $y = X^{\delta}$ is chosen small enough. Combining
the above equations it follows
that the contribution of the off-diagonal terms is bounded by $\ll (hk(X))^2 y^{4+\varepsilon} X^{1 - \eta}$
and this is $\ll X^{1 - \kappa}$ for some $\kappa > 0$ provided that $\delta > 0$ is chosen small enough. 
\end{proof}
\begin{proof}[Proof of Proposition~\ref{pr:upperbound}]
By previous lemma and Chebychev's inequality the measure of the set of $x \in [X, 2X]$ for which
$$
\bigg | \sum_{x \leq n \leq x + hk(x)} sgn(\lambda_f(n))w_n \bigg |
\geq c K \sqrt{h}
$$
is at most $X/K^2$. 
\end{proof}


\section{Proof of Corollary~\ref{cor:Cho}}
\label{se:pfofcor}
Write $g(n) = \text{sgn}(\lambda_f(n))$, so that $g(n)$ is a multiplicative function taking values in $\{-1, 0, 1\}$.
The upper bound
\[
\sum_{n \leq x} g(n)g(n+1) \leq (1 - c) x.
\]
follows immediately from Theorem~\ref{th:signchanges}. To obtain the lower bound
\begin{equation}
\label{eq:cholowe}
\sum_{n \leq x} g(n)g(n+1) \geq (-1 + c) x,
\end{equation}
we first note that, by~\cite[Lemma 4.1]{GhoshSarnak}, there is an even integer $b \ll_f 1$ such that $g(2^b) = 1$. Notice that the claim is trivial unless $g(2^j) \neq 0$ for all $j \leq b$. If $g(2) = -1$, the existence of such $b$ implies that there is an integer $j < b$ such that $g(2^j) = -1$ and $g(2^{j+1}) = 1$. On the other hand if $g(2) = 1$, then, since $b$ is even, there must be an integer $j < b$ such that $g(2^j) = g(2^{j+1})$. Hence in any case we find $j \ll 1$ such that $g(2^{j+1}) = g(2) g(2^j)$.

Consider now $n \equiv 2^j-1 \pmod{2^{j+1}}$. Notice that, for such $n$, $g(2n) = g(2) g(n)$ and 
\[
g(2n+2) = g(2^{j+1}) g((n+1)/2^j) = g(n+1) g(2^{j+1})/g(2^j) = g(2)g(n+1),
\]
so that
\[
g(n)g(n+1)g(2n)g(2n+1)g(2n+1)g(2n+2) = (g(n)g(n+1)g(2)g(2n+1))^2 \geq 0.
\]
Hence one of $g(n) g(n+1)$, $g(2n)g(2n+1)$ and $g(2n+1)g(2n+2)$ must be non-negative, and \eqref{eq:cholowe} follows since positive proportion of $n \leq x/2$ satisfy $n \equiv 2^j-1 \pmod{2^{j+1}}$.

\section*{Acknowledgments}
The authors would like to thank the anonymous referee for suggesting a complete re-write of the proof (a variant of which is presented
here). The referee's argument led to several improvements and to a much cleaner proof, for which the authors are extremely
grateful.  
The authors would also like to thank Andrew Granville for providing the proof of Lemma~\ref{le:multfunc+-1} and Matti Jutila for mentioning the shifted convolution problem. They are also grateful to Jie Wu and Wenguang Zhai for an e-mail discussion concerning~\cite{WuZhai13} and to Eeva Vehkalahti for helpful comments on earlier versions of this manuscript. They also would like to thank Peter Sarnak for a Lemma which was used in the previous version of the paper and for posing the question on the entropy of $\lambda_f(n)$ . Finally the authors are grateful to Brian Conrey for suggesting
the use of a mollifier at the October 2013 meeting in Oberwolfach.
\bibliography{HeckeSignChanges}
\bibliographystyle{plain}
\end{document}